\theoremstyle{plain}
\newtheorem{conjecture}{Conjecture}[section]
\newtheorem{theorem}[conjecture]{Theorem}
\newtheorem{corollary}[conjecture]{Corollary}
\newtheorem{lemma}[conjecture]{Lemma}
\newtheorem{proposition}[conjecture]{Proposition}
\newtheorem{question}[conjecture]{Question}
\newtheorem{example}[conjecture]{Example}
\newtheorem{definition}[conjecture]{Definition}
\newtheorem*{question*}{Question}
\newenvironment{customthm}[1]
  {\innercustomthm}
  {\endinnercustomthm}
\newenvironment{customcor}[1]
  {\innercustomcor}
  {\endinnercustomcor}
\newcommand{\prefix}{\operatorname{Pre}}
\newcommand{\voting}{\rho}
\newcommand{\majrule}[1]{\prec^{\mathrm{maj}}_{#1}}
\newcommand{\Inv}{\operatorname{Inv}}
\newcommand{\supp}{\operatorname{supp}}
\newcommand{\ceven}{c_{\mathrm{even}}}
\newcommand{\codd}{c_{\mathrm{odd}}}
\newcommand{\ii}{\mathbf{i}}
\newcommand{\jj}{\mathbf{j}}
\newcommand{\lf}{\mathbf{m}}
\newcommand{\one}{\mathbb{1}}
\newcommand{\DDD}{\mathcal{D}}
\newcommand{\NN}{\mathbb{N}}
\newcommand{\RR}{\mathbb{R}}
\author{Victor Reiner}
\address{School of Mathematics, University of Minnesota, Minneapolis MN, USA}
\email{reiner@umn.edu}
\author{Bridget Eileen Tenner}   
\address{Department of Mathematical Sciences, DePaul University, Chicago, IL, USA}
\email{bridget@math.depaul.edu}
\keywords{voting, majority rule, Condorcet, acyclic domain, reduced word, commuting class, tiling, poset, heap, bipartite, Coxeter element, sorting network, sort, odd-even, alternating scheme, cocktail shaker, higher Bruhat}
\title{Majority relations for Condorcet domains of tiling type}
\begin{document}

\begin{abstract}
Condorcet domains are subsets of permutations arising in voting theory: regarding their
permutations as preference orders on a list of candidates,
one avoids Condorcet's paradox when
aggregating the preferences
via a simple {\it majority relation}. We use poset theory to show that, for the subclass of Condorcet domains of tiling type, the majority rule has stronger properties.  We then develop techniques to predict the majority rule
explicitly for the uniform vote tally on
Condorcet domains of tiling type, and
apply this to several well-known
examples.   
\end{abstract}

\maketitle

\section{Introduction, background, and examples}
\label{sec:intro}

Voting theory sometimes models a voter's preference among a set of candidates $[n]:=\{1,2,\ldots,n\}$
as a permutation $w=(w_1,\ldots, w_n)$ in the symmetric group $S_n$.  One thinks of $w$ as specifying the preference order
$w_1 <_w w_2 <_w \cdots <_w w_n$ among the candidates, so that $<_w$ is a complete, antisymmetric, transitive binary relation on $[n]$.
Given any function $\voting: S_n \rightarrow \NN:=\{0,1,2,\ldots\}$ that specifies the {\it vote tally} or a {\it voting profile}, where $\voting(w)$ is the number of voters with preference order $w$, there are various schemes to aggregate these preferences into a single binary relation on $[n]$.  
This paper focuses on one such scheme, 
called the {\it (strict) majority} relation (see, e.g., Monjardet \cite{Monjardet}, Puppe and Slinko \cite{PuppeSlinko}).  It is the binary relation $\majrule{\voting}$ on $[n]$ defined as follows: 
\begin{equation*}
a \majrule{\voting} b\quad  \text{ if and only if }
\sum_{\substack{w \in S_n:\\ a <_w b}} \voting(w) >  \sum_{\substack{w \in S_n:\\ b <_w a}} \voting(w),
\text{ equivalently, if and only if }
\sum_{\substack{w \in S_n:\\ a <_w b}} \voting(w) >  \frac{1}{2} |\voting|
\end{equation*}
where $|\voting|:=\sum_{w \in S_n} \voting(w)$  is the total vote tally;
that is, the number of voters.

The binary relation $\majrule{\voting}$ is always {\it antisymmetric}, meaning that it has no $2$-cycles $a \majrule{\voting} b \majrule{\voting} a$.  However, it is susceptible to {\it Condorcet's paradox}, allowing longer cycles\footnote{Warning: In \cite{Monjardet, PuppeSlinko} the authors use more neutral symbols like $R^{\mathrm{maj}}_\voting$ rather than $\majrule{\voting}$, due to the possibility of these cycles.}.
For example, if $\voting(w)=1$ for each of the three linear orders $\{(1,2,3),(2,3,1),(3,1,2)\}$ in $S_3$, and $\voting(w)=0$ for all other $w$ in $S_3$, then
$1 \majrule{\voting} 2 \majrule{\voting} 3 \majrule{\voting} 1$.
This problem is avoided when one restricts
the {\it support set} of $\voting$
\begin{equation*}
\supp(\voting):=\{w \in S_n: \voting(w) > 0\}
\end{equation*}
to lie inside what is known as a {\it Condorcet} or {\it acyclic domain} $\DDD$;
that is, whenever, for each triple $\{a,b,c\} \subset [n]$, there do not exist $u,v,w$ in $\DDD$ with 
$a <_u b <_u  c$ and $b  <_v c <_v  a$ and $c <_w a <_w  b$.
See Monjardet \cite{Monjardet}, Puppe and Slinko \cite{PuppeSlinko} for useful surveys and background on Condorcet domains and majority rule relations.

There is a substantial literature on constructing
{\it large} Condorcet domains $\DDD$ inside $S_n$. 
One such construction includes work
of Chameni-Nembua \cite{Chameni-Nembua}, Abello \cite{Abello}, Galambos and Reiner \cite{GalambosReiner}, and Danilov, Karzanov, and Koshevoy \cite{DanilovKarzanovKoshevoy, DanilovKarzanovKoshevoy2021},
These last authors referred to the domains $\DDD$ produced via this construction as {\it Condorcet domains of tiling type}.  A special case includes the {\it alternating schemes} introduced by
Fishburn \cite{Fishburn1997, Fishburn2002}.

The goal of this paper is to point out that
the majority relation $\majrule{\voting}$
has much better behavior and is easier to compute when one restricts the support set $\supp(\voting)$ to equal {\it on the nose} a Condorcet domain of tiling type.  In this context, the computation of $\majrule{\voting}$ will be reformulated in Section~\ref{sec:heaps} below using a connection between Condorcet domains $\DDD$ of tiling type and the theory of {\it posets}.
We will also be particularly
interested in computing $\majrule{\voting}$ explicitly for the special case where $\voting=\one_\DDD$ is the
{\it uniform vote tally} defined by $\one_\DDD(w)=1$ if $w$ lies in $\DDD$, and $\one_\DDD(w)=0$ for $w$ in $S_n \setminus \DDD$.
In this uniform case, if the Condorcet domain $\DDD$ of tiling type has a certain symmetry,
Section~\ref{sec:horizontal-folds} will show how
to calculate $\majrule{\one_\DDD}$ explicitly.  Section~\ref{sec:examples} then applies this work to several important examples, including Fishburn's alternating scheme.  In another direction, Section~\ref{sec:disconnected-heaps} reduces the computation to the case of {\it indecomposable} permutations in $S_n$, that is, those that belong to no proper Young subgroup $S_{n_1} \times S_{n_2}$. Finally, Section~\ref{sec:remarks-and-questions} presents a direction for future research about the relationship between majority relations and the higher Bruhat order $B(n,2)$.

\medskip
In the remainder of this introduction, we define Condorcet domains of tiling type and explain their relation to posets more fully, illustrating some of our results with a running example.  

The story starts with the {\it Coxeter presentation} of the symmetric group, which uses the generating set $S=\{s_1,s_2,\ldots,s_{n-1}\}$ where $s_i$ is the adjacent transposition swapping $i \leftrightarrow i+1$,
satisfying these relations:
\begin{align}
s_i^2=1 &\text{ for }i=1,2,\ldots,n-1,\\
\label{eq:commuting-relation}
s_is_j=s_js_i &\text{ for }|i-j| \geq 2,\\
\label{eq:Yang-Baxter-relation} s_is_{i+1}s_i=s_{i+1}s_is_{i+1} &\text{ for }i=1,2,\ldots,n-2.
\end{align}
Permutations $w$ in $S_n$ can have
many factorizations $w=s_{i_1} s_{i_2} \cdots s_{i_\ell}$ with $s_i$ in $S$.  The
shortest factorizations
have length $\ell=:\ell(w)=\#\Inv(w)$, the number of {\it inversions} of $w$,
equivalently, the size of its {\it inversion set}
\begin{equation*}
\Inv(w):=\{(a,b): 1 \leq a<b \leq n \text{ and }b <_w a\}.
\end{equation*}
One calls the sequence $\ii=(i_1,i_2,\ldots,i_\ell)$ of subscripts appearing in one of these shortest factorizations $w=s_{i_1} s_{i_2} \cdots s_{i_\ell}$
a {\it reduced word} for $w$.  Given any reduced word $\ii$ for $w$,
a theorem of Matsumoto \cite{Matsummoto} and Tits \cite{Tits} (see Bj\"orner and Brenti \cite[Thm.~3.3.1]{BjornerBrenti}) shows how to produce
all of the others:  any two reduced words $\ii$ and $\ii'$ for $w$ can be connected by a sequence $\ii=\ii_0 - \ii_1 - \cdots -\ii_{m-1} - \ii_m=\ii'$ in which each step 
$\ii_j - \ii_{j+1}$ applies either a {\it commuting move} as in~\eqref{eq:commuting-relation} or a {\it Yang-Baxter relation} as in~\eqref{eq:Yang-Baxter-relation}.  When one only allows the commuting moves
from~\eqref{eq:commuting-relation}, one calls the reduced words obtainable from $\ii$ by a sequence of such moves its {\it commuting equivalence class} or \emph{commutation class}, denoted $C(\ii)$. 

\begin{definition} \rm
\label{def:Condorcet-domain-of-tiling-type}
    A {\it Condorcet domain $\DDD$ of tiling type} corresponds to the choice of a permutation\footnote{We are actually allowing a slightly broader definition here than the one in Danilov, Karzanov, and Koshevoy \cite{DanilovKarzanovKoshevoy}.
    They additionally required that $w=w_0=(n,n-1,\ldots,3,2,1)$, the longest permutation, so that the resulting domain $\DDD=\prefix(C)$ is {\it complete/inclusion-maximal}, meaning that $\DDD \sqcup \{v\}$ is never a Condorcet domain for any $v \in S_n \setminus \DDD$.} $w$ in $S_n$ along with the choice of a  commuting equivalence class $C=C(\ii)$ for one of its reduced words $\ii=(i_1,\ldots,i_\ell)$.
 The domain $\DDD$ is defined to be the set $\prefix(C)$ of all permutations $w'=s_{i_1'} s_{i'_2} \cdots s_{i'_m}$, for which $(i_1',i'_2,\ldots,i'_m)$ is a prefix 
of a reduced word $\ii'=(i'_1,i_2',\ldots,i'_{\ell})$ in the same 
commuting equivalence class $C$. In other words,
$$\DDD=\prefix(C) := \{s_{i_1'}s_{i_2'} \cdots s_{i'_{m}} :
(i_1',i'_2,\ldots,i'_m) \text{ is the prefix of an element } \ii' \text{ in }C\}.$$
\end{definition}

Such domains $\DDD=\prefix(C)$ have a more concrete description as a consequence of
Cartier and Foata's theory of
{\it partial commutation monoids} \cite{CartierFoata}, later called {\it heaps of pieces} by Viennot \cite{Viennot}; see also Stanley \cite[Exer.~3.123]{Stanley-EC1}.
One forms the {\it heap poset} $P_C$ from any reduced word $\ii=(i_1,\ldots,i_\ell)$ in $C$, whose
elements are the positions $\{1,2,\ldots,\ell\}$, and where $j <_{P_C} k$ if $j < k$ in $\NN$ and $s_{i_j}$ and $s_{i_k}$ do not commute, that is, if $|i_j-i_k| \leq 1$.  It is helpful to label
the element $j$ in the Hasse diagram of $P_C$ by $s_{i_j}$, which allows one to read off  $\prefix(C)$
from the distributive lattice $J(P_C)$ of all {\it order ideals} $J \subseteq P_C$:  for each order ideal $J$, reading its labels $s_{i_j}$ in the order of any linear
extension gives a permutation $w'=s_{i'_1}s_{i'_2} \cdots s_{i'_{\ell'}}$ lying in $\DDD$.
It is also helpful to consider a third labeling of $P_C$, labeling the element $j$ by the unique inversion pair $ab$ in $\Inv(w)$ for which
$$
\{(a,b)\}=
\Inv(s_{i_1} s_{i_2} \cdots s_{i_{j-1}} s_{i_j}) \setminus \Inv(s_{i_1} s_{i_2} \cdots s_{i_{j-1}} ). 
$$
This last labeling will be so useful that we will often regard $P_C$ as a poset on $\Inv(w)$.

We will thus have four different ways of referring to the specific object of our interest: it is a domain $\DDD \subset S_n$, which is precisely a Condorcet domain of tiling type $\prefix(C)$, and which is equal to the support set $\supp(\voting)$ of our vote tally $\voting$. Moreover, the distributive lattice $J(P_C)$ of order ideals in the heap poset $P_C$ can be labeled by the elements of $\prefix(C)$, and so we will also sometimes find it convenient to write $J(P_C)$ for $\DDD = \prefix(C) = \supp(\voting)$. 

\begin{example} \rm
\label{ex:intro-example}
Consider $w=(3,4,1,2,7,5,6)$
in $S_7$, abbreviated $w=3412756$ when suppressing commas and parentheses causes no confusion. Then $$
\Inv(w)=\{(1,3),(1,4),(2,3),(2,4),(5,7),(6,7)\}=\{13,14,23,24,57,67\},
$$
again suppressing commas and parentheses, so that
 $\ell(w)=\#\Inv(w)=6$. One of its shortest factorizations $w=s_2 s_1 s_3 s_2 s_6 s_5$ has corresponding reduced word
$\ii=(2,1,3,2,6,5)$.  In fact, it is not hard to show that there are exactly $30 = 2\binom{6}{2}$
reduced words for this $w$, all obtained by shuffling either of the two sequences $(2,1,3,2),(2,3,1,2)$ with the sequence $(6,5)$, all lying in the same commuting equivalence class $C=C(\ii)$.  
For example, $\ii'=(6,2,3,5,1,2)$
is another reduced word for $w$,
and $C(\ii')=C=C(\ii)$.

One can write down $P_C$ along with its two other labelings, using $\ii$ or $\ii'$ or any other element
of $C$.  We choose to do it here using $\ii$, which we interpret as the following sequence of permutations in $S_7$ 
from the identity $1234567$ to $w=3412756$, visiting the permutations $w'$ factored by a prefix of $\ii$, and creating one more inversion (identified below the arrow) in their inversion sets $I(w')$ at each stage:
$$
1234567 \ 
\begin{matrix} s_2\\ \longmapsto \\ 23 \end{matrix}
\ 
1324567 \ 
\begin{matrix} s_1\\ \longmapsto \\ 13 \end{matrix}
\ 
3124567 \ 
\begin{matrix} s_3\\ \longmapsto \\ 24 \end{matrix}
\ 
3142567 \ 
\begin{matrix} s_2\\ \longmapsto \\ 14 \end{matrix}
\ 
3412567 \ 
\begin{matrix} s_6\\ \longmapsto \\ 67 \end{matrix}
\ 
3412576 \ 
\begin{matrix} s_5\\ \longmapsto \\ 57 \end{matrix}
\ 
3412756
$$
The corresponding three labelings of $P_C$ are as follows.
\begin{center}
\begin{minipage}{.3\textwidth}
 \hspace*{0.25\linewidth}
\begin{tikzpicture}
  [scale=.18,auto=left,every node/.style={circle,fill=black!20}]
  \node (1) at (0,0) {$1$};
  \node (2) at (-4,4) {$2$};
  \node (3) at (4,4) {$3$};
  \node (4) at (0,8) {$4$};
  \node (5) at (8,1) {$5$};
  \node (6) at (8,7) {$6$};
  \foreach \from/\to in {1/2,1/3,2/4,3/4,5/6}
    \draw (\from) -- (\to);
\end{tikzpicture}
\end{minipage}
\begin{minipage}{.3\textwidth}
 \hspace*{0.25\linewidth}
\begin{tikzpicture}
  [scale=.18,auto=left,every node/.style={circle,fill=black!20}]
  \node (si1) at (0,0) {$s_2$};
  \node (si2) at (-4,4) {$s_1$};
  \node (si3) at (4,4) {$s_3$};
  \node (si4) at (0,8) {$s_2$};
  \node (si5) at (8,1) {$s_6$};
  \node (si6) at (8,7) {$s_5$};
  \foreach \from/\to in {si1/si2,si1/si3,si2/si4,si3/si4,si6/si5}
    \draw (\from) -- (\to);
\end{tikzpicture}
\end{minipage}
\begin{minipage}{.3\textwidth}
 \hspace*{0.25\linewidth}
\begin{tikzpicture}
  [scale=.18,auto=left,every node/.style={circle,fill=black!20}]
  \node (23) at (0,0) {$23$};
  \node (13) at (-4,4) {$13$};
  \node (24) at (4,4) {$24$};
  \node (14) at (0,8) {$14$};
  \node (67) at (8,1) {$67$};
  \node (57) at (8,7) {$57$};
  \foreach \from/\to in {23/13,23/24,13/14,24/14,67/57}
    \draw (\from) -- (\to);
\end{tikzpicture}
\end{minipage}
\end{center}
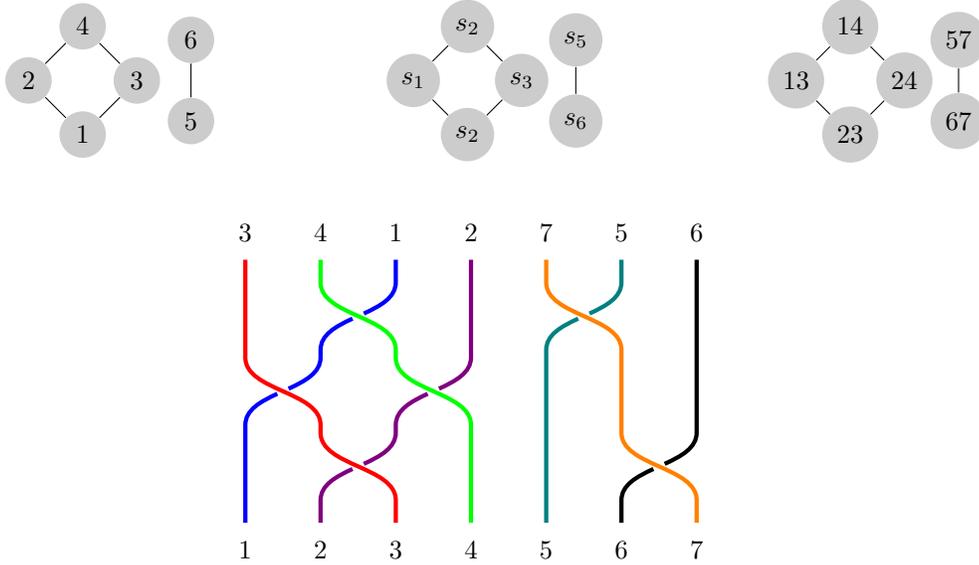
\begin{figure}[htbp]
\begin{tikzpicture}
\pic[
rotate=0,
braid/.cd,
every strand/.style={ultra thick},
strand 1/.style={red},
strand 2/.style={green},
strand 3/.style={blue},
strand 4/.style={violet},
strand 5/.style={orange},
strand 6/.style={teal}
] (coords) {braid={s_2-s_5 s_1-s_3 s_2-s_6}};
\node[ at=(coords-rev-1-e),  label=south : $1$ ] {} ;
\node[ at=(coords-rev-2-e),  label=south : $2$ ] {} ;
\node[ at=(coords-rev-3-e),  label=south : $3$ ] {} ;
\node[ at=(coords-rev-4-e),  label=south : $4$ ] {} ;
\node[ at=(coords-rev-5-e),  label=south : $5$ ] {} ;
\node[ at=(coords-rev-6-e),  label=south : $6$ ] {} ;
\node[ at=(coords-rev-7-e),  label=south : $7$ ] {} ;
\node[ at=(coords-rev-1-s),  label=north : $1$ ] {} ;
\node[ at=(coords-rev-2-s),  label=north : $2$ ] {} ;
\node[ at=(coords-rev-3-s),  label=north : $3$ ] {} ;
\node[ at=(coords-rev-4-s),  label=north : $4$ ] {} ;
\node[ at=(coords-rev-5-s),  label=north : $5$ ] {} ;
\node[ at=(coords-rev-6-s),  label=north : $6$ ] {} ;
\node[ at=(coords-rev-7-s),  label=north : $7$ ] {} ;
\end{tikzpicture}
\caption{The wiring diagram for the reduced word $\ii=(2,1,3,2,6,5)$ of the permutation $w=3412756$ in $S_7$.}
\label{fig:wiring disagram of 3412756 example}
\end{figure}
One can also view $P_C$ as a poset on the set of
{\it strand crossings} appearing in the {\it wiring diagram} associated to $\ii$;  see Figure~\ref{fig:wiring disagram of 3412756 example}.
Labeling $P_C$ via the inversions $ab$ in $\Inv(w)$ is the same as labeling the crossing of strand $a$ and strand $b$ by $ab$ in the wiring diagram shown in Figure~\ref{fig:wiring disagram of 3412756 example},
with the understanding $ab <_{P_c} cd$ if 
$\{a,b\} \cap \{c,d\}=\{x\}$ and the $ab$ inversion is created in $\ii$ before the $cd$ inversion.  In other words,
$ab <_{P_C} cd$ exactly when the crossings $ab$ and $cd$ occur on a common strand $x$, with $ab$ occurring earlier (lower on the page) than $cd$.

Figure~\ref{fig:distributive lattice of 3412756} depicts the distributive lattice $J(P_C)$ of order ideals $J \subseteq P_C$. Each element is labeled by the corresponding $w'$ in $\prefix(C)$
with a shortest factorization $s_{i'_1} \cdots s_{i'_{\ell'}}$
for $w'$ written above it. 
\begingroup
\Large
\begin{figure}[htbp]
\begin{tikzpicture}
  [scale=.15,auto=left]
  \node (e) at (0,0) {$\substack{\varnothing\\[.5ex]1234567}$};
  \node (2) at (-10,10) {$\substack{s_2\\[.5ex]1324567}$};
  \node (6) at (10,10) {$\substack{s_6\\[.5ex]1234576}$};
  \node (21) at (-20,20) {$\substack{s_2s_1\\[.5ex]3124567}$};
  \node (23) at (-10,20) {$\substack{s_2s_3\\[.5ex]1342567}$};
\node (26) at (0,20) {$\substack{s_2s_6\\[.5ex]{{\color{red}{u}=1324576}}}$};
  \node (65) at (20,20) {$\substack{s_6s_5\\[.5ex]1234756}$};
  \node (213) at (-20,30) {$\substack{s_2s_1s_3\\[.5ex]3142567}$};
  \node (216) at (-10,30) {$\substack{s_2s_1s_6\\[.5ex]3124576}$};
  \node (236) at (0,30) {$\substack{s_2s_3s_6\\[.5ex]1342576}$};
  \node (265) at (10,30) {$\substack{s_2s_6s_5\\[.5ex]1324756}$};
  \node (2132) at (-30,40) {$\substack{s_2s_1s_3s_2\\[.5ex]3412567}$};
  \node (2136) at (-10,40) {$\substack{s_2s_1s_3s_6\\[.5ex]{{\color{red}{v}=3142576}}}$};
  \node (2165) at (0,40) {$\substack{s_2s_1s_6s_5\\[.5ex]3124756}$};
  \node (2365) at (10,40) {$\substack{s_2s_3s_6s_5\\[.5ex]1342756}$};
    \node (21326) at (-20,50) {$\substack{s_2s_1s_3s_2s_6\\[.5ex]3412576}$};
  \node (21365) at (0,50) {$\substack{s_2s_1s_3s_6s_5\\[.5ex]3142756}$};
\node (213265) at (-10,60) {$\substack{s_2s_1s_3s_2s_6s_5\\[.5ex]{{w}}=3412756}$};
  \foreach \from/\to in {e/2,e/6,
  2/21,2/23,2/26,6/65,6/26,
21/216,21/213,23/213,23/236,
26/216,26/236,26/265,65/265,
213/2132,213/2136,216/2136,236/2136,236/2365,265/2365,216/2165,265/2165,
2132/21326,2136/21326,2136/21365,2165/21365,2365/21365,
21326/213265,21365/213265}
    \draw (\from) -- (\to);
\end{tikzpicture}
\caption{The distributive lattice of order ideals of $P_C$, where $C$ is the commutation class containing the reduced word $(2,1,3,2,6,5)$ of the permutation $3412756$. Each element is labeled by its corresponding permutation in $\prefix(C)$, together with a shortest factorization of that permutation.}
\label{fig:distributive lattice of 3412756}
\end{figure}
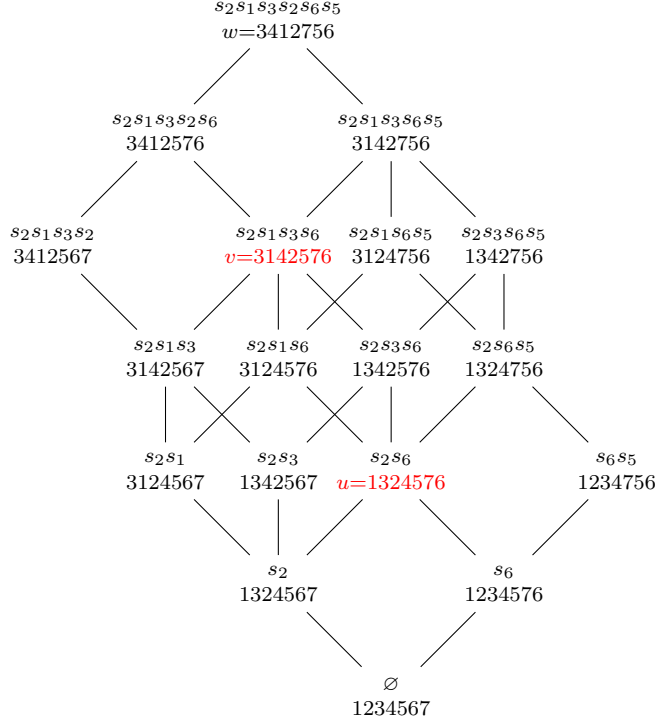
\endgroup
Two particular elements $u,v \in \prefix(C)$, which play a role in Examples~\ref{ex:intro-example-2}, \ref{ex:intro-example-3}, and~\ref{ex:intro-example-4} below, are identified in Figure~\ref{fig:distributive lattice of 3412756}.
\end{example}

It turns out that when $\supp(\voting)$ is a
Condorcet domain of tiling type, the majority relation $\majrule{\voting}$ is {\it nearly} a total order, in a precise
sense.

\begin{definition} \rm
Call a binary relation $\prec$ on $[n]$ a {\it prelinear order} if there exists an ordered set partition
$(B_1,B_2,\ldots,B_m)$ of $[n]=B_1 \sqcup B_2 \sqcup \cdots \sqcup B_m$
for which $a \prec b$ if and only if
$a \in B_r$ and $b \in B_s$ with $r<s$.
Say that this prelinear order $\prec$ has {\it only simple ties} if $|B_r|\leq 2$ for $r=1,2,\ldots,m$.
\end{definition}

Section~\ref{sec:heaps} will deduce the following corollary from the proof
of the main calculational result,
Theorem~\ref{thm:calcuating-maj-rule}.

\begin{corollary}
 \label{cor:no big/weird ties}   
When $\voting:S_n \rightarrow \NN$ has $\supp(\voting)$ that is a Condorcet domain of tiling type, then its majority relation $\majrule{\voting}$ is a prelinear order with only simple ties. 
\end{corollary}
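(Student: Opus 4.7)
The plan is to combine Theorem~\ref{thm:calcuating-maj-rule} with a short reduction to three candidates. First, I would invoke (the proof of) Theorem~\ref{thm:calcuating-maj-rule}, which gives an explicit description of $\majrule{\voting}$ in terms of the heap poset $P_C$. This explicit description arranges the candidates $[n]$ into an ordered set partition $(B_1,\ldots,B_m)$ -- because the majority directions among pairs are all coherent once read off from the combinatorial data on $P_C$ -- and hence establishes that $\majrule{\voting}$ is a prelinear order. In particular, for any pair $\{a,b\}$ with $a<b$ and $(a,b)\notin\Inv(w)$, the containment $\Inv(w')\subseteq\Inv(w)$ for every $w'\in\DDD$ forces unanimous agreement $a<_{w'}b$, so $a\majrule{\voting}b$ strictly and this pair cannot contribute to any tie.

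For the more delicate claim that every block $B_r$ has size at most two, I would argue by contradiction. Suppose three candidates $a<b<c$ are mutually tied under $\majrule{\voting}$. By the previous paragraph, all three pairs $(a,b),(a,c),(b,c)$ must lie in $\Inv(w)$. Strands $a,b,c$ then cross pairwise in the wiring diagram of any $\ii\in C$, so these three elements share strands pairwise and form a $3$-element chain in $P_C$. Consequently, the restricted heap $P_C|_{\{a,b,c\}}$ is this chain and admits exactly four order ideals.

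Next, I would verify that the restriction map $J(P_C)\twoheadrightarrow J(P_C|_{\{a,b,c\}})$, sending $J\mapsto J\cap\{(a,b),(a,c),(b,c)\}$, is surjective: for any target order ideal $J_0$ of the $3$-chain, its maximum element $x$ satisfies $\langle x\rangle\cap\{(a,b),(a,c),(b,c)\}=J_0$ in $P_C$, since the three triple-inversions are totally ordered in $P_C$. Because $\supp(\voting)=\DDD$, each fiber of this restriction carries positive aggregate weight $M_\tau:=\sum_{w':w'|_{\{a,b,c\}}=\tau}\voting(w')>0$ for each of the four triple-restricted permutations $\tau$. The three pairwise majority margins among $\{a,b,c\}$ are explicit $\pm 1$-linear combinations of these four $M_\tau$, and requiring all three to vanish simultaneously yields a linear system that forces two of the $M_\tau$ to equal zero, contradicting strict positivity. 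This rules out triple ties and gives $|B_r|\leq 2$.

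The main obstacle is the first step, where establishing prelinearity rests on the specific combinatorial description furnished by Theorem~\ref{thm:calcuating-maj-rule}. Once that structure is in hand, the three-candidate reduction ruling out three-way ties is a short calculation enabled by the chain structure of the restricted heap and the surjectivity of the restriction to triples.
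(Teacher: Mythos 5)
There is a genuine gap in your first step, and it is exactly the step you flag as the ``main obstacle.'' The \emph{statement} of Theorem~\ref{thm:calcuating-maj-rule} only tells you that $\majrule{\voting}=\,<_u\cap\,<_v$ for two linear orders, and an intersection of two linear orders need not be a prelinear order: for $u=123$ and $v=231$ the intersection relates only $2\prec 3$, leaving $1$ incomparable to both, and no ordered set partition realizes that relation. So ``the majority directions are coherent'' is not a proof of prelinearity. What is actually needed is that $\Inv(u)\subseteq\Inv(v)$ and that the difference $\Inv(v)\setminus\Inv(u)=A=\{ab:\Sigma_\voting(ab)=\tfrac12|\voting|\}$ is an antichain of pairwise \emph{disjoint} inversions, so that $v$ is obtained from $u$ by swapping disjoint adjacent entries; this is precisely where the paper's proof of the corollary lives, via Proposition~\ref{prop:strictly-decreasing}(i) and Proposition~\ref{prop:inversions-on-same-strand-are-comparable}, and it delivers prelinearity and simple ties simultaneously. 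Note also that your triple computation only bounds block sizes \emph{after} prelinearity is known --- ``no three mutually tied candidates'' by itself does not produce an ordered set partition. If you permit yourself to cite the proof (not just the statement) of Theorem~\ref{thm:calcuating-maj-rule}, the whole corollary is already established there and your second step is redundant; if not, you must supply the disjoint-antichain fact yourself.

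That said, your second step is correct and genuinely different from the paper's route to ``only simple ties.'' You rightly reduce to the case where all of $ab,ac,bc$ lie in $\Inv(w)$ (non-inversions are unanimous, hence strictly ordered), use Proposition~\ref{prop:inversions-on-same-strand-are-comparable} to see these three crossings form a $3$-chain in $P_C$, verify surjectivity of the restriction $J(P_C)\to J(P_C|_{\{ab,ac,bc\}})$ via principal ideals, and note that the four fiber weights are strictly positive because $\supp(\voting)$ equals $\prefix(C)$ on the nose. For the chain $ab<_{P_C}ac<_{P_C}bc$ the four restricted orders are $abc$, $bac$, $bca$, $cba$ with weights $M_1,\dots,M_4$, and the tie equations $M_1=M_2+M_3+M_4$ and $M_1+M_2=M_3+M_4$ already force $M_2=0$, a contradiction. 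This is a pleasant ``never condition on triples'' argument in the classical Condorcet spirit; the paper's antichain argument is shorter once the tally function $\Sigma_\voting$ is in place, but yours isolates why three-way ties are impossible candidate-by-candidate.
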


The conclusion of
Corollary~\ref{cor:no big/weird ties} often fails for arbitrary Condorcet domains.  E.g., for each $n \geq 2$, consider the Condorcet domain $\mathcal{D}=\{e,w_0\} \subset S_n$ containing only the {\it identity permutation} $e:=(1,2,3,\ldots,n-1, n)$ 
and the {\it longest permutation} $w_0:=(n,n-1, \ldots,3,2,1)$.
Then the uniform vote tally $\voting=\one_\DDD$ has $\majrule{\voting}$ being the {\it empty} binary relation when considered as a 
subset of $[n] \times [n]$. That is, all
$n$ elements are ``tied" in $\majrule{\voting}$.

\begin{example} \rm
\label{ex:intro-example-2}
Consider the Condorcet domain $\prefix(C)$ of tiling type from Example~\ref{ex:intro-example}, with
uniform vote tally $\voting={\one_{\prefix(C)}}$.
With some brute force effort, one can check that its majority rule $\majrule{\voting}$ is
the prelinear order whose ordered set partition of $[7]$
has blocks
$
(B_1,B_2,B_3,B_4,B_5)
=(\{1,3\},\{2,4\},\{5\},\{7\},\{6\}).
$ 
That is, it consists of the permutations 
$$
1,3 \majrule{\voting}
2,4 \majrule{\voting}
5 \majrule{\voting}
7 \majrule{\voting}
6,
$$
with no relations between $1$ and $3$,
and no relations between $2$ and $4$.  We can abbreviate this prelinear order, suppressing commas, as 
$$\{1 \, 3\} \, \{2 \, 4\} \, 5 \, 7 \, 6.$$
Note that
this binary relation
$\majrule{\voting}$, when considered
as a subset of $[n] \times[n]$, 
is the intersection of the two linear orders $<_u$ and $<_v$, where
$u=1324576$ and $v=3142576$ in $\prefix(C)$ are labeled in Figure~\ref{fig:distributive lattice of 3412756}. This will be explained below.
\end{example}

Corollary~\ref{cor:no big/weird ties}
will follow from one our
main results, Theorem~\ref{thm:calcuating-maj-rule} below, showing that if $\supp(\voting)=\prefix(C)$ 
is of tiling type, then the computation of $\majrule{\voting}$ can be reduced to the following poset calculation.

\begin{definition} \rm
\label{def:voting-tally-function}
Given $\voting:S_n \rightarrow \NN$ having $\supp(\voting)=\prefix(C)$ of tiling type,
define the {\it $\voting$-tally function} 
$$
\begin{array}{rrcl}
\Sigma_\voting: & \Inv(w) &\longrightarrow &\{1,2,\ldots\}\\[1ex]
 & ab & \longmapsto &\Sigma_\voting(ab):=
\hspace{-.1in}\sum\limits_{\substack{w \in \prefix(C):\\ 
ab \in \Inv(w)}} \hspace{-.1in} \voting(w).
\end{array}
$$
\end{definition}

Recall that $|\voting|:=\sum_{w \in S_n} \voting(w)$ is the total number of voters.

\begin{theorem}
\label{thm:calcuating-maj-rule}
Assume $\voting: S_n \rightarrow \NN$
has $\supp(\voting)=\prefix(C)$ of tiling type. Regarding $P_C$ as a poset on $\Inv(w)$, 
the $\voting$-tally function $\Sigma_\voting: P_C \rightarrow \{1,2,\ldots\}$
is a strictly decreasing function,
that is, 
$$
ab <_{P_C} cd \text{ implies } \Sigma_\voting(ab) > \Sigma_\voting(cd).
$$
Furthermore, the majority relation
$\majrule{\voting}$, viewed as a subset of $[n] \times [n]$, is the intersection of two linear orders $<_u$ and $<_v$, with 
$u,v \in\prefix(C)$ defined uniquely by these inversion sets, which are order ideals in $P_C$:
\begin{align}
\Inv(u)&=\left\{ ab \in \Inv(w): \Sigma_\voting(ab)> \frac{1}{2}|\voting|\right\},\label{eqn:defining u as strict}\\
\Inv(v)&=\left\{ ab \in \Inv(w): \Sigma_\voting(ab) \geq \frac{1}{2}|\voting|\right\}.\label{eqn:defining v as weak}
\end{align}
\end{theorem}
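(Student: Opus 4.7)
The plan is to work throughout with the heap poset $P_C$ labeled by $\Inv(w)$, exploiting the standard bijection between order ideals $J \subseteq P_C$ and permutations $w' \in \prefix(C)$, under which the label set of $J$ equals $\Inv(w')$. Via this dictionary, $\Sigma_\voting(ab)$ is exactly the sum of $\voting(w')$ over those $w' \in \prefix(C)$ whose inversion-set-as-order-ideal contains $ab$. To prove strict decrease, I would suppose $ab <_{P_C} cd$. Every order ideal containing $cd$ must also contain $ab$, so the sum defining $\Sigma_\voting(cd)$ ranges over a subset of the one defining $\Sigma_\voting(ab)$. This containment is strict: the principal order ideal $\{x \in P_C : x \leq_{P_C} ab\}$ contains $ab$ but not $cd$, and the permutation it represents lies in $\prefix(C) = \supp(\voting)$, so it contributes a strictly positive term to $\Sigma_\voting(ab)$ that is absent from $\Sigma_\voting(cd)$, giving $\Sigma_\voting(ab) > \Sigma_\voting(cd)$.

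Strict monotonicity immediately implies that $\Inv(u)$ and $\Inv(v)$ are order ideals of $P_C$: if $\Sigma_\voting(ab) \geq \tfrac{1}{2}|\voting|$ and $cd <_{P_C} ab$, then $\Sigma_\voting(cd) > \Sigma_\voting(ab) \geq \tfrac{1}{2}|\voting|$, placing $cd$ in the same defining set. Hence $u, v \in \prefix(C)$ are well-defined, with $\Inv(u) \subseteq \Inv(v) \subseteq \Inv(w)$.

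Finally, I would identify $\majrule{\voting}$ with $<_u \cap <_v$ by case analysis on an unordered pair $\{a,b\}$ with $a < b$. If $ab \notin \Inv(w)$, then every $w' \in \prefix(C) = \supp(\voting)$ has $a <_{w'} b$, forcing $a \majrule{\voting} b$; and since $\Inv(u), \Inv(v) \subseteq \Inv(w)$, one also has $a <_u b$ and $a <_v b$. If $ab \in \Inv(w)$, then unpacking the definition of the majority relation yields $b \majrule{\voting} a \iff \Sigma_\voting(ab) > \tfrac{1}{2}|\voting|$, $a \majrule{\voting} b \iff \Sigma_\voting(ab) < \tfrac{1}{2}|\voting|$, with a tie when equality holds. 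These three alternatives correspond exactly to $ab$ lying in both, neither, or only $\Inv(v)$, and so to $b <_u a, b <_v a$ holding together, failing together, or exactly one holding, which is the trichotomy governing the intersection $<_u \cap <_v$ on $\{a,b\}$. The only real content is the first step; everything afterward is bookkeeping, provided one tracks carefully that $\Inv(u) \cup \Inv(v) \subseteq \Inv(w)$, so that non-inversions of $w$ are automatically non-inversions of both $u$ and $v$.
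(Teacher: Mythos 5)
Your proof is correct and takes essentially the same route as the paper's: the strict-decrease argument via the positive contribution of the principal ideal $(P_C)_{\leq ab}$ is exactly the paper's Proposition~\ref{prop:strictly-decreasing}, and your trichotomy comparing $\Sigma_\voting(ab)$ with $\tfrac{1}{2}|\voting|$ is the paper's closing chain of biconditionals reorganized as a case analysis on whether $ab \in \Inv(w)$. The only ingredient you omit is Proposition~\ref{prop:inversions-on-same-strand-are-comparable} (that the tie-level set consists of pairwise \emph{disjoint} inversions), which is not needed for the theorem itself but is what the paper uses in the same proof to extract Corollary~\ref{cor:no big/weird ties}.
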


\begin{example} \rm
\label{ex:intro-example-3}
Continuing Examples~\ref{ex:intro-example}
and~\ref{ex:intro-example-2}, let 
$\prefix(C)$ be the domain of tiling type defined there, and $\voting = \one_{\prefix(C)}$.  The $\voting$-tally function $\Sigma_\voting: P_C \rightarrow \{1,2,\ldots\}$
is depicted by the labeling of $P_C$ on the right, here:
\begin{center}
\begin{minipage}{.3\textwidth}
 \hspace*{0.25\linewidth}
\begin{tikzpicture}
  [scale=.18,auto=left,every node/.style={circle,fill=black!20}]
  \node (23) at (0,0) {$23$};
  \node (13) at (-4,4) {$13$};
  \node (24) at (4,4) {$24$};
  \node (14) at (0,8) {$14$};
  \node (67) at (8,1) {$67$};
  \node (57) at (8,7) {$57$};
  \foreach \from/\to in {23/13,23/24,13/14,24/14,67/57}
    \draw (\from) -- (\to);
\end{tikzpicture}
\end{minipage}
\begin{minipage}{.3\textwidth}
\hspace*{0.25\linewidth}
\begin{tikzpicture}
  [scale=.18,auto=left,every node/.style={circle,fill=black!20}]
  \node (23) at (0,0) {$15$};
  \node (13) at (-4,4) {$9$};
  \node (24) at (4,4) {$9$};
  \node (14) at (0,8) {$3$};
  \node (67) at (8,1) {$12$};
  \node (57) at (8,7) {$6$};
  \foreach \from/\to in {23/13,23/24,13/14,24/14,67/57}
    \draw (\from) -- (\to);
\end{tikzpicture}
\end{minipage}
\end{center}
Note that  $|\voting|=|\DDD|=|J(P_C)|=18$, so $\frac{1}{2}|\voting|=9$.  Therefore the linear orders $<_u$ and $<_v$ are defined by
\begin{align*}
    \Inv(u)&=\{23,67\}=\{ab \in \Inv(w): \Sigma_\voting(ab) > 9\}, \text{ so that  } u=1324576, \text{ and}\\
    \Inv(v)&=\{23,13,24,67\} =\{ab \in \Inv(w): \Sigma_\voting(ab) \geq 9\}, \text{ so that  } v=3142576.
\end{align*}
Thus Theorem~\ref{thm:calcuating-maj-rule} computes $\majrule{\voting}$ as the intersection $<_u \cap <_v$,
agreeing with Example~\ref{ex:intro-example-2}:
$$
1,3 \majrule{\voting}
2,4 \majrule{\voting}
5 \majrule{\voting}
7 \majrule{\voting}
6.
$$
\end{example}

The remainder of the paper focuses on computing the majority rule $\majrule{\voting}$
for the uniform vote tally $\voting=\one_{\prefix(C)}$ on 
a Condorcet domain of tiling type $\prefix(C)$. 
Section~\ref{sec:horizontal-folds} shows
how to compute $\majrule{\voting}$ for $\voting=\one_{\prefix(C)}$ 
when the heap poset $P_C$ has what we call a {\it horizontal folding symmetry}.  Roughly speaking, this is an involutive poset anti-automorphism $\varphi: P_C \rightarrow P_C$, satisfying certain axioms, saying
$\varphi$ swaps related elements above and below a horizontal ``fold.''
This is illustrated below for $P_C$ as in Examples~\ref{ex:intro-example}, \ref{ex:intro-example-2}, and~\ref{ex:intro-example-3}.
\begin{center}
\begin{minipage}{.3\textwidth}
 \hspace*{0.25\linewidth}
\begin{tikzpicture}
  [scale=.18,auto=left,every node/.style={circle,fill=black!20}]
  \node (23) at (0,0) {$23$};
  \node (13) at (-4,4) {$13$};
  \node (24) at (4,4) {$24$};
  \node (14) at (0,8) {$14$};
  \node (67) at (8,1) {$67$};
  \node (57) at (8,7) {$57$};
\coordinate (leftfoldend) at (-8,4);
\coordinate (rightfoldend) at (12,4);
\draw[dashed] (leftfoldend) -- (rightfoldend);
  \foreach \from/\to in {23/13,23/24,13/14,24/14,67/57}
    \draw (\from) -- (\to);
\end{tikzpicture}
\end{minipage}
\end{center}
Here the involution $\varphi$ fixes $13$ and $24$, swapping $23 \leftrightarrow 14$ and swapping $67 \leftrightarrow 57$.

Theorem~\ref{thm:fold-result} will assert that the two linear orders $<_u,<_v$
in Theorem~\ref{thm:calcuating-maj-rule},
whose intersection gives $\majrule{\voting}$, have their inversions sets $\Inv(u)$ and $\Inv(v)$,
respectively, 
given by the elements of $P_C=\Inv(w)$ lying strictly and weakly ``below the fold,'' respectively.

\begin{example} \rm
\label{ex:intro-example-4}
In the above running example,
the uniform vote tally $\voting = \one_{\prefix(C)}$ 
has $\majrule{\voting}$ equal to $ <_u \cap <_v$
with $u=1324576$ since $\Inv(u)=\{23,67\}$ is the set of elements of $P_C$
lying strictly below the fold,
and with $v=3142576$ since
$\Inv(v)=\{23,13,24,67\}$ is
the set of elements of $P_C$
lying weakly below the fold.
\end{example}

Section~\ref{sec:examples} then applies
Theorem~\ref{thm:fold-result} toward calculating $\majrule{\voting}$
for the uniform voting tally $\voting = \one_{\prefix(C)}$ 
on several well-known Condorcet domains $\prefix(C)$ 
of tiling type, including Fishburn's alternating scheme and some of its generalizations, as well as the commutation classes of the lexicographically first and lexicographically last reduced words for the longest permutation $w_0=(n,n-1,\ldots,3,2,1)$.

\section{Heap posets and the proof of Theorem~\ref{thm:calcuating-maj-rule}} 
\label{sec:heaps}

Condorcet domains of tiling type,
defined in Definition~\ref{def:Condorcet-domain-of-tiling-type}, depend upon the choice of a permutation $w$ in $S_n$ and the choice of a commuting equivalence class $C=C(\ii)$ for a reduced word $\ii$ of $w$.
The description of the associated {\it Cartier-Foata poset} or {\it Viennot heap} $P_C$ was also given there, along with additional labelings of
its elements, one by adjacent transpositions $s_i$ and one by the pairs $ab$ in the inversion set $\Inv(w)$.
For the theory of partial commutation
and heap posets in general, see
Cartier and Foata \cite{CartierFoata}
and Viennot \cite{Viennot}.  For its application to commutation classes of reduced words, see 
Elnitsky \cite{Elnitsky},
Fan and Stembridge \cite[\S 2.2]{FanStembridge},
Green and Losonczy \cite[\S 5]{GreenLosonczy}, and
Stembridge \cite{Stembridge}.
Discussion of the relation to Condorcet domains of tiling type
may be found in the work of Danilov, Karzanov, and Koshevoy \cite{DanilovKarzanovKoshevoy, DanilovKarzanovKoshevoy2021} and
Galambos and Reiner \cite[\S 2, Def.~6]{GalambosReiner}.  For related
applications and discussion, see
Banaian, Chepuri, Gunawan, and Pan~\cite[\S 2.1]{BanaianChepuriGunawanPan}, 
Escobar, Pechenik, Tenner, and Yong~\cite[\S 2]{EscobarPechenikTennerYong},
Goodman and Pollack~\cite{GoodmanPollack},
Labb\'e and Lange~\cite[\S 3.1]{LabbeLange},
 Reiner and Roichman~\cite[\S 5]{ReinerRoichman}, and 
Schilling, Thi\'ery, White, and Williams~\cite[\S 2]{SchillingThieryWhiteWilliams}.

\medskip
For the remainder of this section,
fix a permutation $w$ in $S_n$ and a commuting equivalence class $C=C(\ii)$ for one of its reduced words $\ii$. 
Here is the crucial fact that distills some of the discussion in \cite{CartierFoata, FanStembridge, GreenLosonczy, Stembridge, Viennot}. 

\begin{proposition}
\label{prop:ideals-in-heap-are-prefixes}
Regarding the heap $P_C$ as a partial order on $\Inv(w)$,
the following map is a bijection: 
$$
\begin{array}{rrcl}
\DDD=\prefix(C) & \overset{\Inv}{\longrightarrow} & J(P_C)\\
w' &\longmapsto& \Inv(w').
\end{array}
$$
\end{proposition}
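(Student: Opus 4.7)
The plan is to prove three statements: (i) the map $w' \mapsto \Inv(w')$ really lands in $J(P_C)$; (ii) it is injective; and (iii) it is surjective. Injectivity is automatic, since any permutation of $[n]$ is determined by its inversion set. The substance lies in (i) and (iii), both of which rest on a foundational fact that I would verify first: that the inversion-pair labeling of positions in $P_C$ --- defined via one choice of $\ii \in C$ --- is intrinsic to the heap, i.e.\ independent of the choice of reduced word within the commuting class.

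To check this well-definedness, consider two reduced words in $C$ differing by a single commutation move at positions $p, p+1$ labeled $s_i, s_j$ with $|i-j|\geq 2$. Writing $u = s_{i_1}\cdots s_{i_{p-1}}$ for the common prefix before the swap, the position labeled $s_i$ creates the inversion pair $\{u(i), u(i+1)\}$ and the position labeled $s_j$ creates $\{u(j), u(j+1)\}$; after the commutation the two positions are listed in the opposite order, but each still carries the same inversion-pair label, because $s_i$ fixes $j$ and $j+1$ while $s_j$ fixes $i$ and $i+1$. Iterating over a chain of commutation moves --- which, by the Matsumoto--Tits fact for commutation classes, connects any two words in $C$ --- shows that the labeling descends to a well-defined bijection between the elements of $P_C$ and $\Inv(w)$, so that the heap partial order transports to a well-defined poset structure on $\Inv(w)$.

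With this in hand, (i) follows quickly: given $w' \in \prefix(C)$ with $w' = s_{i'_1}\cdots s_{i'_m}$ a prefix of some $\ii' \in C$, the positions $\{1,2,\ldots,m\}$ form the initial segment of the linear extension $\ii'$ of $P_C$ and hence constitute an order ideal; and by construction the inversion-pair labels of these positions comprise exactly the inversions created at each successive multiplication, so they are exactly $\Inv(w')$. Thus $\Inv(w')$ is an order ideal of $P_C$ regarded as a poset on $\Inv(w)$. For (iii), given an order ideal $J \in J(P_C)$ of size $m$, I would choose any linear extension of $P_C$ whose first $m$ elements are precisely $J$ (possible since $J$ is down-closed in the finite poset $P_C$); reading the $s_{i_j}$-labels along this extension yields a reduced word $\ii' \in C$, and its length-$m$ prefix multiplies to some $w' \in \prefix(C)$ with $\Inv(w') = J$ by the argument of (i).

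The main obstacle is precisely the well-definedness of the inversion-pair labeling on $P_C$, since every subsequent identification depends on it; once that is in hand, the rest is a clean rephrasing of the standard bijection between initial segments of linear extensions and order ideals of a finite poset.
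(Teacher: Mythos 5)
The paper does not actually prove this proposition: it is stated as a ``crucial fact that distills'' the theory of Cartier--Foata/Viennot heaps and its application to commutation classes, with citations to \cite{CartierFoata, FanStembridge, GreenLosonczy, Stembridge, Viennot} in place of an argument. Your proposal therefore supplies a self-contained proof where the paper defers to the literature, and the proof you give is essentially the standard one underlying those references. Your identification of the real issue --- that the inversion-pair labeling of $P_C$ must be shown to be independent of the representative $\ii \in C$ --- is exactly right, and your commutation-move computation (that $s_j$ fixes positions $i$ and $i+1$ when $|i-j| \geq 2$, so each occurrence keeps its inversion label across a swap) is correct; connectivity of $C$ under commutation moves is immediate from the definition of a commutation class, so no appeal to Matsumoto--Tits is even needed there. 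The only step you assert without justification is in the surjectivity argument: that reading the $s$-labels along an \emph{arbitrary} linear extension of $P_C$ again produces a reduced word lying in $C$. This is true, but it deserves a sentence: any two linear extensions of a finite poset are connected by transpositions of adjacent incomparable elements, and incomparable adjacent positions in $P_C$ carry generators $s_{i_j}, s_{i_k}$ with $|i_j - i_k| \geq 2$ (by the definition of the heap order), so each such transposition is a commutation move, keeping the word reduced and inside $C$. With that sentence added, your proof is complete and is a reasonable expansion of what the paper leaves implicit.
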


The following property of the heap poset $P_C$ on $\Inv(w)$ will be used in the proof of Theorem~\ref{thm:calcuating-maj-rule}.

\begin{proposition}
\label{prop:inversions-on-same-strand-are-comparable}
Two inversions $ab, cd$ in $\Inv(w)$
with $|\{a,b\} \cap \{c,d\}|=1$
are always comparable in  $P_C$.
\end{proposition}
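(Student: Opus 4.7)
The plan is to use the wiring-diagram description of $P_C$. Write $\{a,b\}\cap\{c,d\}=\{x\}$, so that $\{a,b\}=\{x,y\}$ and $\{c,d\}=\{x,z\}$; since the two inversions are distinct, $y\neq z$. Under the labeling of $P_C$ by inversions, the elements $ab$ and $cd$ correspond exactly to the two crossings along strand $x$, where it meets strand $y$ and strand $z$, respectively. Because the wiring diagram of any reduced word is reduced, each pair of strands crosses at most once, so these two crossings occur at two distinct indices in any $\ii=(i_1,\ldots,i_\ell)\in C$.

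Fix such an $\ii\in C$ and let $j_1<j_2<\cdots<j_m$ be the complete list of indices at which strand $x$ participates in a crossing. The key observation is that between two consecutive such crossings $j_s$ and $j_{s+1}$, strand $x$ stays in a single column $c$: only crossings involving $x$ itself can change the column occupied by $x$, and by assumption none occurs in the interval $(j_s, j_{s+1})$. The move of strand $x$ into column $c$ at step $j_s$ and its exit from column $c$ at step $j_{s+1}$ each act on a pair of adjacent columns containing $c$, so $i_{j_s},\, i_{j_{s+1}}\in\{c-1,c\}$, giving $|i_{j_s}-i_{j_{s+1}}|\leq 1$. By the defining relation of the heap order, this produces the relation $j_s<_{P_C} j_{s+1}$ directly.

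Stringing these together and taking transitive closure yields a chain $j_1<_{P_C} j_2<_{P_C}\cdots<_{P_C} j_m$ in $P_C$. Since $ab$ and $cd$ both appear on this chain (as crossings along strand $x$), they are comparable in $P_C$, which is the claim.

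The only delicate point is the column-tracking step: one must check that a crossing at position $j'\in(j_s,j_{s+1})$, which does not involve strand $x$, genuinely leaves the column of $x$ undisturbed. This is immediate, however, because such a crossing swaps two strands in columns $\{i_{j'}, i_{j'}+1\}$ both distinct from the column occupied by $x$, so no further obstacle arises.
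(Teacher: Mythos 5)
Your proof is correct, but it takes a genuinely different route from the paper's. The paper argues by contradiction: assuming $ab$ and $cd$ are incomparable, it uses the order-ideal structure of $P_C$ to manufacture a reduced word in $C$ in which the letters creating $ab$ and $cd$ occupy consecutive positions, and then observes that the shared value forces those two adjacent transpositions to act on overlapping columns, hence not to commute, producing the forbidden relation $ab <_{P_C} cd$. You instead fix a single reduced word and argue directly: tracking the column occupied by strand $x$ between its crossings shows that any two consecutive crossings along strand $x$ carry generators $s_{i_{j_s}}$ and $s_{i_{j_{s+1}}}$ with $|i_{j_s}-i_{j_{s+1}}|\le 1$, so the full set of crossings on strand $x$ forms a chain in $P_C$. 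Both arguments ultimately rest on the same kernel --- a shared strand forces two letters into adjacent, hence non-commuting, positions --- but yours is direct rather than by contradiction, avoids the ideal/linear-extension machinery (essentially a second invocation of the prefix--ideal correspondence), and yields the slightly stronger conclusion that \emph{all} inversions containing a fixed value $x$ are totally ordered in $P_C$, not merely that a given pair is comparable. The paper's version is more economical given that the ideal machinery is already set up for Theorem~\ref{thm:calcuating-maj-rule}; yours is more self-contained and makes the geometric content of the statement explicit. Your closing remark about crossings at positions $j'\in(j_s,j_{s+1})$ not disturbing the column of $x$ is exactly the right point to flag, and your justification of it is sound.
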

\begin{proof}
Assume for the sake of contradiction that $ab$ and $cd$
are incomparable in $P_C$. Then if one defines 
$$
I:= (P_C)_{<ab} \cup (P_C)_{<cd} =
\{ef \in \Inv(w) \ : \ ef <_{P_C} ab \text{ or } ef<_P cd\},
$$
one concludes that all three
sets $I$, $I \sqcup \{ab\}$, and $I \sqcup \{ab,cd\}$ are ideals of $P_C$. Consequently, one can choose a total ordering of $\Inv(w)=P_C$
with all three of these subsets
as prefixes --- first list the elements of $I$ in any total order extending their partial order, then $ab$, then $cd$, then the remaining elements of $\Inv(w)$ in any total order extending their partial order. This total order corresponds to a shortest factorization $w=s_{j_1} s_{j_2} \cdots s_{j_\ell}$ for $w$, 
with $(j_1,j_2,\ldots,j_\ell)$ in $C$. This contains, as prefixes, shortest factorizations for
$w'$, $w's_{j_k}$, and $w's_{j_k} s_{j_{k+1}}$, 
where $\Inv(w')=I$.  Then $s_{j_k}$ and $s_{j_{k+1}}$ introduce the inversions $ab$ and $cd$, respectively. Because $|\{a,b\} \cap \{c,d\}|=1$, the adjacent transpositions $s_{j_k}$ and $s_{j_{k+1}}$ do not commute.  Thus $|j_k-j_{k+1}|=1$ and so
$ab <_{P_C} cd$, contradicting the assumption that $ab$ and $cd$ are incomparable.
\end{proof}

Now fix a voting profile  $\voting: S_n \rightarrow \NN$ having  $\supp(\voting)=\DDD$.
Recall from Definition~\ref{def:voting-tally-function} the $\voting$-tally function
$$
\begin{array}{rrcl}
\Sigma_\voting: & \Inv(w) &\longrightarrow &\{1,2,\ldots\}\\[1ex]
 & ab & \longmapsto &\Sigma_\voting(ab):=
\hspace{-.1in}\sum\limits_{\substack{w \in \prefix(C):\\ 
ab \in \Inv(w)}} \hspace{-.1in} \voting(w).
\end{array}
$$

\begin{proposition}
\label{prop:vote-tally-rewritings}
One has these re-interpretations of  $\Sigma_\voting$:
\begin{align}
\label{eq:voting-tally-rewriting-1}
\Sigma_\voting(ab)
&:=
\sum_{\substack{w' \in \prefix(C):\\ 
ab \in \Inv(w')}} \voting(w')
\hspace{.1in} =
\sum_{\substack{I \in J(P_C):\\ ab \in I}}\voting( \Inv^{-1}(I) )\\[1ex]
\label{eq:voting-tally-rewriting-2}
&=
\sum_{\substack{w' \in \prefix(C):\\ 
b <_{w'} a}} \voting(w')
\hspace{.1in} = 
\sum_{\substack{w' \in S_n:\\ b <_{w'} a}} \voting(w').
\end{align}
\end{proposition}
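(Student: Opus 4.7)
The plan is to read off the four expressions as a chain of equalities, each one either a reindexing or an unpacking of a definition. No deep idea is required; the proposition is a bookkeeping step that will be invoked when proving Theorem~\ref{thm:calcuating-maj-rule}, so the goal is just to record what is already encoded in Proposition~\ref{prop:ideals-in-heap-are-prefixes} and the definitions of $\Inv$ and $\supp(\voting)$.

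First I would establish the equality in~\eqref{eq:voting-tally-rewriting-1}. The starting sum is indexed over $w' \in \prefix(C)$ satisfying $ab \in \Inv(w')$. Proposition~\ref{prop:ideals-in-heap-are-prefixes} provides the bijection $w' \leftrightarrow I:=\Inv(w')$ between $\prefix(C)$ and $J(P_C)$, and regarding $P_C$ as a poset on $\Inv(w)$, the condition $ab \in \Inv(w')$ transports under this bijection to $ab \in I$. Substituting $w'=\Inv^{-1}(I)$ throughout, the sum over $\{w' \in \prefix(C): ab \in \Inv(w')\}$ becomes the sum over $\{I \in J(P_C): ab \in I\}$ term-by-term.

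Next I would verify the first equality in~\eqref{eq:voting-tally-rewriting-2}. Because $ab$ is chosen as an element of $\Inv(w)$, the convention $1 \le a < b \le n$ applies, so by the very definition of the inversion set, $ab \in \Inv(w')$ is equivalent to $b <_{w'} a$. This changes only the description of the indexing condition, not the set of indices, so the two sums agree.

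Finally, the equality between the two sums in~\eqref{eq:voting-tally-rewriting-2} follows because $\voting(w') = 0$ whenever $w' \in S_n \setminus \prefix(C)$, by the hypothesis $\supp(\voting) = \prefix(C)$. Thus appending the $w' \in S_n \setminus \prefix(C)$ with $b <_{w'} a$ to the summation contributes nothing, and the sum over $\{w' \in \prefix(C): b<_{w'} a\}$ coincides with the sum over $\{w' \in S_n: b<_{w'} a\}$. The only thing to be slightly careful about is maintaining the convention $a<b$ when translating $ab \in \Inv(w')$ into $b <_{w'} a$; beyond that the proof is purely a matter of rewriting.
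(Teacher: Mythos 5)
Your proof is correct and follows the same route as the paper's: the first equality in \eqref{eq:voting-tally-rewriting-1} is the definition, the second is the bijection of Proposition~\ref{prop:ideals-in-heap-are-prefixes}, the first equality in \eqref{eq:voting-tally-rewriting-2} unpacks the definition of $\Inv(w')$ (using $a<b$), and the last uses $\supp(\voting)=\prefix(C)$. Nothing is missing.
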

\begin{proof} 
In~\eqref{eq:voting-tally-rewriting-1},
the first equality
is Definition~\ref{def:voting-tally-function}, rewritten with a different summation variable $w'$ so as not to suggest dependence on our fixed $w$, 
and the second equality is
from the bijection in  Proposition~\ref{prop:ideals-in-heap-are-prefixes}.
In~\eqref{eq:voting-tally-rewriting-2},
the first equality is from the definition of $\Inv(w')$, and the second from assuming $\supp(\voting)=\prefix(C)$.
\end{proof}

Proposition~\ref{prop:vote-tally-rewritings} has poset-theoretic consequences.
Indeed, recall that in a poset $P$, 
\begin{itemize} 
\item a subset $A \subseteq P$ is called an {\it antichain} if no 
two elements of $A$ are comparable in $P$, and 
\item a subset $I \subseteq P$ is called an {\it (order) ideal} if $x <_P y$ and $y \in I$
implies $x \in I$.
\end{itemize}

\begin{proposition}
\label{prop:strictly-decreasing}
The function $\Sigma_\voting$ 
is strictly decreasing as one moves up in the poset $P_C$; that is, $ab <_{P_C} cd$ implies $\Sigma_\voting(ab) > \Sigma_\voting(cd)$.

Consequently, one has the following for all $r$ in $\RR$:
\begin{itemize}
    \item[(i)] The level set $A:=\{ab \in \Inv(w): \Sigma_\voting(ab)=r\}$ is an antichain in $P_C$,
    and hence has the form
    $$A=\{a_1b_1, a_2b_2,\ldots,a_mb_m\}$$
    for pairwise disjoint inversions; that is, $\{a_j,b_j\} \cap \{a_k,b_k\}=\varnothing$ for $1 \leq j \neq k \leq m$.
    \item[(ii)] The weak superlevel set $I:=\{ab \in \Inv(w): \Sigma_\voting(ab) \geq r\}$ is an order ideal $I$ in $P_C$, and therefore of the
    form $I=\Inv(v)$ for some $v$ in $\prefix(C)$. 
    \item[(iii)] The strict superlevel set $I':=\{ab \in \Inv(w): \Sigma_\voting(ab) > r\}$ is an order ideal $I'$ in $P_C$, and therefore of the
    form $I':=\Inv(u)$ for some $u$ in $\prefix(C)$.
\end{itemize}
\end{proposition}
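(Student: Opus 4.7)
My plan is to prove strict monotonicity of $\Sigma_\voting$ directly from the order-ideal rewriting~\eqref{eq:voting-tally-rewriting-1} in Proposition~\ref{prop:vote-tally-rewritings}, and then extract (i)--(iii) as formal consequences by combining this with Propositions~\ref{prop:ideals-in-heap-are-prefixes} and~\ref{prop:inversions-on-same-strand-are-comparable}. First I would establish the weak inequality $\Sigma_\voting(ab) \geq \Sigma_\voting(cd)$ when $ab <_{P_C} cd$: since every order ideal $I \in J(P_C)$ that contains $cd$ automatically contains $ab$, the sum defining $\Sigma_\voting(ab)$ in~\eqref{eq:voting-tally-rewriting-1} includes every term of the sum for $\Sigma_\voting(cd)$. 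To promote this to strict inequality, I would exhibit the principal downset $I_0 := \{ef \in P_C : ef \leq_{P_C} ab\}$ as a witness: it is an order ideal containing $ab$ but not $cd$ (since $ab <_{P_C} cd$ forces $cd \not\leq_{P_C} ab$), and by Proposition~\ref{prop:ideals-in-heap-are-prefixes} it equals $\Inv(w')$ for some $w' \in \prefix(C)$, whose weight $\voting(w')$ is strictly positive because $\supp(\voting) = \prefix(C)$. This single extra term in the $\Sigma_\voting(ab)$ sum produces the strict gap.

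The three consequences then fall out quickly. For (ii) and (iii), if $cd$ lies in the weak (respectively strict) super-level set and $ef <_{P_C} cd$, strict monotonicity gives $\Sigma_\voting(ef) > \Sigma_\voting(cd) \geq r$ (respectively $> r$), so $ef$ belongs too; the identification with $\Inv(u)$ or $\Inv(v)$ is then Proposition~\ref{prop:ideals-in-heap-are-prefixes}. For (i), strict monotonicity forces the level set $A$ to be an antichain in $P_C$, and then Proposition~\ref{prop:inversions-on-same-strand-are-comparable} does the remaining work: any two distinct elements $a_jb_j, a_kb_k \in A$ with $|\{a_j,b_j\} \cap \{a_k,b_k\}| = 1$ would be comparable in $P_C$, contradicting antichainness, so the labeling pairs must be pairwise disjoint.

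The main obstacle is really only the strictness in the monotonicity step. The weak inequality is tautological from ideal inclusion, but the strict version genuinely needs $\voting$ to be positive everywhere on $\prefix(C)$, not merely nonnegative on $S_n$; this is the sole place where the full hypothesis $\supp(\voting) = \prefix(C)$ enters. Everything else is a formal rearrangement of definitions or a direct appeal to the two previously established propositions.
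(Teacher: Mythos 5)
Your proposal is correct and follows essentially the same route as the paper's proof: the strict inequality is obtained from the same witness (the principal ideal $(P_C)_{\leq ab}$, which contains $ab$ but not $cd$ and carries strictly positive weight because $\supp(\voting)=\prefix(C)$), and parts (i)--(iii) are deduced exactly as in the paper via Propositions~\ref{prop:ideals-in-heap-are-prefixes} and~\ref{prop:inversions-on-same-strand-are-comparable}. No gaps.
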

\begin{proof}
Note that $ab <_{P_C} cd$ implies
$
\{I \in J(P_C): ab \in I\} \supset 
\{I \in J(P_C): cd \in I\},
$
and the inclusion is strict, since the principal
ideal $I:=(P_C)_{\leq ab}$  contains $ab$ but not $cd$.  As $\supp(\voting)=\prefix(C)$
implies $\voting(\Inv^{-1}(I)) > 0$
for all $I$ in $J(P_C)$, one concludes from~\eqref{eq:voting-tally-rewriting-1} that
$$
\Sigma_\voting(ab)
\hspace{.1in} =\sum_{\substack{I \in J(P_C):\\ ab \in I}}\voting( \Inv^{-1}(I) )
\hspace{.1in} >
\sum_{\substack{I \in J(P_C):\\ cd \in I}}\voting( \Inv^{-1}(I) )
\ = \
\Sigma_\voting(cd).
$$
Assertions (ii) and (iii) follow immediately from $\Sigma_\voting$ being a
decreasing function. The assertion in (i) that $A$ is an antichain  follows from the fact that $\Sigma_\voting$ is {\it strictly} decreasing, and the rest of assertion (i)
follows from Proposition~\ref{prop:inversions-on-same-strand-are-comparable}.
\end{proof}

This allows us to prove Theorem~\ref{thm:calcuating-maj-rule}, whose
statement we recall here.
Recall also that $|\voting|:=\sum_{w \in S_n} \voting(w)$.

\begin{customthm}{\ref{thm:calcuating-maj-rule}}
    Assume $\voting: S_n \rightarrow \NN$
has $\supp(\voting)=\prefix(C)$ of tiling type. Regarding $P_C$ as a poset on $\Inv(w)$, 
the $\voting$-tally function $\Sigma_\voting: P_C \rightarrow \{1,2,\ldots\}$
is a strictly decreasing function,
that is, 
$$
ab <_{P_C} cd \text{ implies } \Sigma_\voting(ab) > \Sigma_\voting(cd).
$$
Furthermore, the majority relation
$\majrule{\voting}$, viewed as a subset of $[n] \times [n]$, is the intersection of two linear orders $<_u$ and $<_v$, with 
$u,v \in\prefix(C)$ defined uniquely by these inversion sets, which are order ideals in $P_C$:
:
\begin{align*}
\Inv(u)&=\left\{ ab \in \Inv(w): \Sigma_\voting(ab)> \frac{1}{2}|\voting|\right\},\\
\Inv(v)&=\left\{ ab \in \Inv(w): \Sigma_\voting(ab) \geq \frac{1}{2}|\voting|\right\}.
\end{align*}
\end{customthm}

\begin{proof}
The first statement was proved in Proposition~\ref{prop:strictly-decreasing}. 
Setting $r:=\frac{1}{2}|\voting|$ in
Proposition~\ref{prop:strictly-decreasing}
implies that these sets 
\begin{align*}
I&:=\{ab \in \Inv(w): \Sigma_\voting(ab) > r\},\\
A&:=\{ab \in \Inv(w): \Sigma_\voting(ab) = r\},
\end{align*}
have $A=\{a_1b_1,a_2b_2,\ldots,a_mb_m\}$ an antichain in $P_C$,
and both $I$ and $I \sqcup A$ are order ideals in $P_C$.  Hence there will exist permutations $u,v$ in $\prefix(C)$
with $\Inv(u)=I$ and $\Inv(v)=I \sqcup A$.
It follows that the two linear orders
$<_u$ and $<_v$ almost agree, and the binary relation $R \subset [n] \times [n]$ which
is the intersection $<_u \cap <_v$ 
is a prelinear order $(B_1,B_2,\ldots,B_{n-m})$ with each block $B_j$
either a singleton $\{c\}$ or a pair $\{a_i,b_i\} \in A$. In other words, the intersection
$<_u \cap <_v$ 
is a prelinear order
with only simple ties.

On the other hand, this binary relation $<_R$
given by $<_u \cap <_v$ has this alternate description:
\begin{align*}
a <_R b \ 
&\text{ if and only if } a<_v b \text{ and } a<_u b,\\
&\text{ if and only if } a<_v b \text{ and } ab \not\in A,\\ 
&\text{ if and only if } \Sigma_\voting(ab) \geq r \text{ but } \Sigma_\voting(ab) \neq r,\\ 
&\text{ if and only if } \Sigma_\voting(ab) > r,\\
&\text{ if and only if } \sum_{w' \in S_n: a <_{w'} b} \hspace{-.1in} \voting(w') > r,\\
&\text{ if and only if } a \majrule{\voting} b. \qedhere
\end{align*}
\end{proof}

Note that, as part of the proof of Theorem~\ref{thm:calcuating-maj-rule},
Corollary~\ref{cor:no big/weird ties} from Section~\ref{sec:intro} has also been proved.

\begin{customcor}{\ref{cor:no big/weird ties}}
    When $\voting:S_n \rightarrow \NN$ has $\supp(\voting)$ that is a Condorcet domain of tiling type, then its majority relation $\majrule{\voting}$ is a prelinear order with only simple ties. 
\end{customcor}

As mentioned in Section~\ref{sec:intro}, the remainder of this paper will focus on computing the majority relation $\majrule{\voting}$ for the uniform vote tally $\voting = \one_{\prefix(C)}$  
on a Condorcet domain $\prefix(C)$ 
of tiling type.  The following observation can be useful in this context.

\begin{proposition}
\label{prop:uniform-vote-tally-function}
    For the uniform vote tally $\voting = \one_{\prefix(C)}$ 
    on a Condorcet domain $\prefix(C)$ 
    of tiling type, the $\voting$-tally function
    $\Sigma_\voting: P_C \rightarrow \{1,2,\ldots\}$ can be reformulated as follows:  for $ab \in \Inv(w)=P_C$, one has
    $$
    \Sigma_\voting(ab)=|\{I \in J(P_C): ab \in I\}|=|J((P_C)_{\not\leq ab})|
    $$
    where $(P_C)_{\not\leq ab}:=\{cd \in P_C: cd \not\leq_{P_C} ab\}$.
\end{proposition}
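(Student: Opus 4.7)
The first equality is essentially just a rewriting of equation~\eqref{eq:voting-tally-rewriting-1} applied to the uniform vote tally. Specifically, for $\voting = \one_{\prefix(C)}$, every permutation in $\prefix(C)$ contributes exactly $1$, and the bijection $\Inv: \prefix(C) \to J(P_C)$ from Proposition~\ref{prop:ideals-in-heap-are-prefixes} gives $\voting(\Inv^{-1}(I)) = 1$ for each $I \in J(P_C)$. Thus the sum $\sum_{I \in J(P_C):\, ab \in I} \voting(\Inv^{-1}(I))$ reduces to counting the order ideals of $P_C$ that contain $ab$.

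For the second equality, my plan is to exhibit an explicit bijection
$$\Phi : \{I \in J(P_C) : ab \in I\} \longrightarrow J((P_C)_{\not\leq ab})$$
defined by $\Phi(I) = I \setminus (P_C)_{\leq ab}$, with inverse $\Psi(I') = (P_C)_{\leq ab} \cup I'$. The underlying observation is that any order ideal $I$ containing $ab$ must contain the entire principal ideal $(P_C)_{\leq ab}$, so it decomposes as a disjoint union $I = (P_C)_{\leq ab} \sqcup \left( I \cap (P_C)_{\not\leq ab} \right)$.

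To verify the bijection, I would carry out two routine checks. First, if $I \in J(P_C)$ contains $ab$, then $\Phi(I) = I \cap (P_C)_{\not\leq ab}$ is an order ideal of $(P_C)_{\not\leq ab}$: if $y \in \Phi(I)$ and $z <_{P_C} y$ with $z \in (P_C)_{\not\leq ab}$, then $z \in I$ (since $I$ is an order ideal of $P_C$), so $z \in \Phi(I)$. Second, given any order ideal $I'$ of $(P_C)_{\not\leq ab}$, the set $\Psi(I') = (P_C)_{\leq ab} \cup I'$ is an order ideal of $P_C$ containing $ab$: for $y \in \Psi(I')$ and $z <_{P_C} y$, one argues by cases on whether $y \in (P_C)_{\leq ab}$ (in which case $z \leq_{P_C} ab$ by transitivity) or $y \in I'$ (in which case $z$ is either in $(P_C)_{\leq ab}$ or in $(P_C)_{\not\leq ab}$, and in the latter case $z \in I'$ since $I'$ is an order ideal of the subposet). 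It is then immediate that $\Phi$ and $\Psi$ are inverse to one another.

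There is no serious obstacle here; the proposition is essentially a structural observation, and the only thing that needs care is the case analysis verifying that $\Psi(I')$ is an order ideal of $P_C$. The result then follows by combining the two equalities.
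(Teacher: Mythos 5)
Your proposal is correct and follows essentially the same route as the paper: the first equality is read off from equation~\eqref{eq:voting-tally-rewriting-1} specialized to the uniform tally, and the second comes from the bijection $I \mapsto I \setminus (P_C)_{\leq ab}$. The paper states this bijection in one line; you have merely supplied the routine verification details.
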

\begin{proof}
The first equality is~\eqref{eq:voting-tally-rewriting-1},
and the second comes from the bijection 
that sends $I \mapsto (I \setminus (P_C)_{\leq ab})$. 
\end{proof}

\section{Heaps with a horizontal folding symmetry}
\label{sec:horizontal-folds}

The goal of this section is Theorem~\ref{thm:fold-result} below, vastly simplifying
the calculation of $\majrule{\voting}$
for Condorcet domains $\prefix(C)$ of tiling type whose heap posets $P_C$ have the following symmetry.

\begin{definition} \rm
\label{def:fold-symmetry}
    Given a finite poset $P$, a self-map $\varphi: P \rightarrow P$ is an {\it antiautomorphism} if it is bijective and $x <_P y$ if and only if $\varphi(x) >_P \varphi(y)$.
Call $\varphi$ a {\it horizontal folding symmetry} if it is an 
antiautomorphism of $P$, which is an involution ($\varphi^2=1_P$), and there exists an order ideal $I$ of $P$ for which
    \begin{itemize}
    \item[(i)] $x \leq_P \varphi(x)$ for all $x \in I$,
    \item[(ii)] $I \cup \varphi(I)=P$,
    \item[(iii)] $I \cap \varphi(I)=:A$ forms an antichain in $P$.
    \end{itemize}
\end{definition}

Informally, we think of the elements of $I$, $\varphi(I)$, and $A$ in Definition~\ref{def:fold-symmetry} as lying weakly {\it below}, {\it above}, and {\it on the fold}, respectively.

\begin{example}\rm
As noted in the introduction,
the poset from Examples~\ref{ex:intro-example}, \ref{ex:intro-example-2}, \ref{ex:intro-example-3}, \ref{ex:intro-example-4}, and~\ref{ex:intro-example-5} has such a horizontal folding symmetry
$\varphi$ that fixes $13$ and $24$, swaps $23 \leftrightarrow 14$, and swaps $67 \leftrightarrow 57$.
\begin{center}
\begin{minipage}{.3\textwidth}
 \hspace*{0.25\linewidth}
\begin{tikzpicture}
  [scale=.18,auto=left,every node/.style={circle,fill=black!20}]
  \node (23) at (0,0) {$23$};
  \node (13) at (-4,4) {$13$};
  \node (24) at (4,4) {$24$};
  \node (14) at (0,8) {$14$};
  \node (67) at (8,1) {$67$};
  \node (57) at (8,7) {$57$};
\coordinate (leftfoldend) at (-8,4);
\coordinate (rightfoldend) at (12,4);
\draw[dashed] (leftfoldend) -- (rightfoldend);
  \foreach \from/\to in {23/13,23/24,13/14,24/14,67/57}
    \draw (\from) -- (\to);
\end{tikzpicture}
\end{minipage}
\end{center}
Here the imaginary {\it fold} is indicated by a dashed line, and
\begin{align*}
I&=\{23,13,24,67\} \text{ lie weakly below the fold}\\ 
\varphi(I)&=\{13,14,24,57\} \text{ lie weakly above the fold},\\
A&=\{13,24\} \text{ lie on the fold}.
\end{align*}
\end{example}

Motivated by Proposition~\ref{prop:uniform-vote-tally-function}, for a finite poset $P$, define the strictly decreasing function
$\Sigma: P \rightarrow \{1,2,\ldots\}$
\begin{equation}
\label{eq:ideal-tally-rephrasing}
\Sigma(x):=|\{I \in J(P):x \in I\}|=|J(P_{\not\leq x})|.
\end{equation}

\begin{lemma}
\label{lem:fold-balancing}
When $P$ has an antiautomorphism $\varphi$, every $x$ in $P$ satisfies
$
\Sigma(x) + \Sigma(\varphi(x))= |J(P)|.
$
\end{lemma}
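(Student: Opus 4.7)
The plan is to exhibit a natural bijection on $J(P)$ induced by $\varphi$ that converts ``contains $x$'' into ``does not contain $\varphi(x)$.'' Because $\varphi$ is order-reversing and a bijection, it sends order ideals of $P$ to order filters, and hence the complementation map
\[
\Psi: J(P) \longrightarrow J(P), \qquad \Psi(I):= P \setminus \varphi(I)
\]
takes order ideals to order ideals. First I would verify that $\Psi$ is a bijection. A direct computation, using that $\varphi$ permutes $P$, gives
\[
\Psi^2(I) \,=\, P \setminus \varphi\bigl(P \setminus \varphi(I)\bigr) \,=\, P \setminus \bigl(\varphi(P) \setminus \varphi^2(I)\bigr) \,=\, \varphi^2(I),
\]
so $\Psi^2 = \varphi^2$ as a map on $J(P)$, and since $\varphi^2$ is itself a poset automorphism (hence a bijection on $J(P)$), it follows that $\Psi$ is bijective.

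Next I would use $\Psi$ to rewrite $\Sigma(\varphi(x))$. By the bijection $\Psi$ and the injectivity of $\varphi$,
\[
\Sigma(\varphi(x)) \,=\, \#\{I \in J(P): \varphi(x) \in I\} \,=\, \#\{I \in J(P): \varphi(x) \in \Psi(I)\} \,=\, \#\{I \in J(P): x \notin I\}.
\]
Adding the definition $\Sigma(x) = \#\{I \in J(P): x \in I\}$ then partitions $J(P)$ according to whether $x$ lies in $I$ or not, yielding
\[
\Sigma(x) + \Sigma(\varphi(x)) \,=\, \#\{I \in J(P): x \in I\} + \#\{I \in J(P): x \notin I\} \,=\, |J(P)|,
\]
as required.

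The only real subtlety is the well-definedness check that $\Psi$ lands in $J(P)$, i.e.\ that if $I$ is an order ideal then $\varphi(I)$ is an order filter; this is immediate from $\varphi$ being an order-reversing bijection. Note that the lemma as stated only hypothesizes that $\varphi$ is an antiautomorphism, not an involution, and the argument above accommodates this: the involutivity from the horizontal folding setup of Definition~\ref{def:fold-symmetry} is not needed here, since $\Psi^2 = \varphi^2$ is automatically a bijection. No further structure (such as the ideal $I$ or antichain $A$ on the fold) enters the proof.
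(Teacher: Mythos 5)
Your proof is correct, and it is essentially the paper's argument in a slightly different packaging: both rest on the same two ingredients, namely the partition of $J(P)$ according to whether $x \in I$, and the fact that composing $\varphi$ with complementation carries order ideals to order ideals. The paper routes this through subposets (identifying $\{I : x \notin I\}$ with $J(P \setminus P_{\geq x})$ and then using the anti-isomorphism onto $P \setminus P_{\leq \varphi(x)}$ together with $|J(Q^{\mathrm{opp}})| = |J(Q)|$), whereas you assemble the same mechanism into one global bijection $\Psi$ on $J(P)$; your observation that involutivity of $\varphi$ is not needed is likewise consistent with the paper, whose proof also uses only the antiautomorphism hypothesis.
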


\begin{proof}
One has the following equalities, with justifications below:
\begin{eqnarray*}
|J(P)|
&=&|\{I \in J(P): x \in I\}|+ |\{I \in J(P): x \not\in I\}|,\\
&\overset{(a)}{=}&\Sigma(x)+ 
|J(P \setminus P_{\geq x})|,\\
&\overset{(b)}{=}&\Sigma(x)+ |J(P \setminus P_{\leq \varphi(x)})|,\\
&\overset{(c)}{=}&\Sigma(x)+ \Sigma(\varphi(x)).
\end{eqnarray*}
Equality (a) uses the fact that an ideal $I \subset P$ does not contain $x$ if and only if $I \subseteq P \setminus P_{\geq x}$. 
Equality (b) comes from two facts. First, the map 
$\varphi$ restricts to a poset anti-isomorphism between
$P \setminus P_{\geq x}$ and $P \setminus P_{\leq \varphi(x)}$,
so that $P \setminus P_{\leq \varphi(x)}$
is isomorphic to the {\it opposite} or {\it dual} poset $(P \setminus P_{\geq x})^{\mathrm{opp}}$.  Second,
for any poset $Q$, the map $J(Q^{\mathrm{opp}}) \rightarrow J(Q)$ sending $I \mapsto Q \setminus I$ is a
bijection, so that $|J(Q^{\mathrm{opp}})|=|J(Q)|$.
Equality (c) uses~\eqref{eq:ideal-tally-rephrasing}.
\end{proof}

\begin{corollary}
\label{cor:fold-corollary}
When $P$ has a horizontal folding symmetry $\varphi$,
\begin{itemize}
\item[(i)]
every $x$ in $I$ weakly below the fold has 
$
\Sigma(x) \geq \frac{1}{2}|J(P)|
$,
with equality if and only if $x \in A$, and
\item[(ii)]
every $y$ in $\varphi(I)$ weakly above the fold has 
$\frac{1}{2}|J(P)| \geq \Sigma(y)$,
with equality if and only if $y \in A$.
\end{itemize}
\end{corollary}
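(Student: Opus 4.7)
The plan is to deduce both parts of Corollary~\ref{cor:fold-corollary} directly from Lemma~\ref{lem:fold-balancing} combined with the strict monotonicity of $\Sigma$ on $P$ and the defining properties of the horizontal folding symmetry. The only non-obvious input is the identification of the ``self-paired'' elements $\{x : \varphi(x)=x\}$ with the antichain $A$ on the fold.

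For part (i), I would take $x \in I$. Property (i) of Definition~\ref{def:fold-symmetry} gives $x \leq_P \varphi(x)$, and since $\Sigma$ is strictly decreasing by its definition in~\eqref{eq:ideal-tally-rephrasing}, this forces $\Sigma(x) \geq \Sigma(\varphi(x))$, with equality if and only if $x = \varphi(x)$. Combining this with Lemma~\ref{lem:fold-balancing}, one obtains
\[
2 \Sigma(x) \;\geq\; \Sigma(x) + \Sigma(\varphi(x)) \;=\; |J(P)|,
\]
which gives $\Sigma(x) \geq \tfrac{1}{2}|J(P)|$ with equality exactly when $x = \varphi(x)$. Part (ii) then follows immediately: if $y \in \varphi(I)$, write $y = \varphi(x)$ with $x \in I$; by Lemma~\ref{lem:fold-balancing} and part (i),
\[
\Sigma(y) \;=\; |J(P)| - \Sigma(x) \;\leq\; \tfrac{1}{2}|J(P)|,
\]
with equality if and only if $\Sigma(x) = \tfrac{1}{2}|J(P)|$, that is, if and only if $x = \varphi(x)$, which in turn says $y = x = \varphi(y)$.

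The remaining step is to verify that $\varphi(x) = x$ is equivalent to $x \in A$. For the forward direction, if $\varphi(x) = x$ and $x \in I$, then $x = \varphi(x) \in \varphi(I)$, so $x \in I \cap \varphi(I) = A$. For the reverse, if $x \in A = I \cap \varphi(I)$, then $x \in I$ and $\varphi(x) \in \varphi(I) \cap \varphi(\varphi(I)) = \varphi(I) \cap I = A$ using $\varphi^2 = 1_P$; by property (i), $x \leq_P \varphi(x)$, and applying property (i) to the element $\varphi(x) \in I$ gives $\varphi(x) \leq_P \varphi(\varphi(x)) = x$, so $x = \varphi(x)$. (Alternatively, since both $x$ and $\varphi(x)$ lie in the antichain $A$ and are comparable, they must coincide.)

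I expect no serious obstacle here; the proof is essentially two lines of inequality manipulation plus the small bookkeeping identification $\{x : \varphi(x)=x\} = A$. The mildly delicate point is making sure the strict monotonicity of $\Sigma$ on comparable pairs is what upgrades the weak inequality to equality exactly on the fold, and that property (i) of the folding symmetry is used symmetrically on both sides of the fold (via $\varphi$) to handle part (ii) without reproving anything.
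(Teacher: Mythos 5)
Your proof is correct and follows essentially the same route as the paper: Lemma~\ref{lem:fold-balancing} plus the strict monotonicity of $\Sigma$ and property (i) of Definition~\ref{def:fold-symmetry}. The only cosmetic difference is that you characterize the equality case via the fixed-point set $\{x:\varphi(x)=x\}=A$, whereas the paper gets $x\in A\Rightarrow$ equality by applying both inequalities of (i) and (ii) to $x\in I\cap\varphi(I)$; both arguments are valid and interchangeable.
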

\begin{proof}
Lemma~\ref{lem:fold-balancing} shows $\Sigma(x)+\Sigma(\varphi(x))=|J(P)|$.
The poset relation $x \leq_P \varphi(x)$ for $x \in I$ from Definition~\ref{def:fold-symmetry}(i), along with the fact that $\Sigma$ is a decreasing function gives  the weak
    inequality $\Sigma(x) \geq \frac{1}{2}|J(P)|$.  The weak inequality $\frac{1}{2}|J(P)| \geq \Sigma(y)$ is argued similarly, using the fact that $\varphi^2=1$ and $\varphi(y) \in I$.

When $x \in A = I \cap \varphi(I)$, the above implies 
    both inequalities $\Sigma(x) \geq \frac{1}{2}|J(P)| \geq \Sigma(x)$, 
    so $\Sigma(x)=\frac{1}{2}|J(P)|$.  Conversely, if one has equality $\Sigma(x) = \frac{1}{2}|J(P)|$, then Lemma~\ref{lem:fold-balancing} shows that one must have also $\Sigma(\varphi(x)) = \frac{1}{2}|J(P)|$.  But then the poset relation $x \leq_P \varphi(x)$
    along with the {\it strictly} order-reversing nature of $\Sigma$ implies $x=\varphi(x)$.  Hence $x$ lies in $I \cap \varphi(I)=A$. 
\end{proof}

Applying Corollary~\ref{cor:fold-corollary}
and Theorem~\ref{thm:calcuating-maj-rule}
to Condorcet domains of tiling type gives the following.

\begin{theorem}
\label{thm:fold-result}
Let $\prefix(C)$  
be a Condorcet domain of tiling type whose heap poset $P_C$ has a horizontal folding symmetry $\varphi$, with  $I$ the ideal weakly below the fold, and $A=I \cap \varphi(I)$.
Then the uniform vote tally $\voting = \one_{\prefix(C)}$ 
has as majority relation $\majrule{\voting}$ the intersection $<_u \cap <_v$ where
$\Inv(u)=I \setminus A$ and $\Inv(v)=I$.
\end{theorem}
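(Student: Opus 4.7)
The plan is to combine Theorem~\ref{thm:calcuating-maj-rule} with Corollary~\ref{cor:fold-corollary}; the theorem really amounts to matching the two characterizations of $\Inv(u)$ and $\Inv(v)$. First I would observe that for the uniform vote tally $\voting=\one_{\prefix(C)}$ one has $|\voting|=|\prefix(C)|=|J(P_C)|$, via the bijection of Proposition~\ref{prop:ideals-in-heap-are-prefixes}. Also, by Proposition~\ref{prop:uniform-vote-tally-function}, the $\voting$-tally function $\Sigma_\voting: P_C \to \{1,2,\ldots\}$ agrees with the strictly decreasing function $\Sigma$ defined in~\eqref{eq:ideal-tally-rephrasing} for the poset $P=P_C$. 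Hence the thresholds $\Sigma_\voting(ab) > \tfrac{1}{2}|\voting|$ and $\Sigma_\voting(ab) \geq \tfrac{1}{2}|\voting|$ become thresholds on $\Sigma$ against the value $\tfrac{1}{2}|J(P_C)|$.

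Next I would invoke Corollary~\ref{cor:fold-corollary}, applied to the horizontal folding symmetry $\varphi$ of $P_C$. Since $I \cup \varphi(I) = P_C$, each element $ab \in P_C$ lies weakly below the fold, weakly above the fold, or on the fold (i.e., in $A = I \cap \varphi(I)$). Corollary~\ref{cor:fold-corollary}(i) shows that for $ab \in I$ we have $\Sigma_\voting(ab) \geq \tfrac12 |J(P_C)|$, with equality exactly when $ab \in A$. Corollary~\ref{cor:fold-corollary}(ii) shows that for $ab \in \varphi(I)$ we have $\Sigma_\voting(ab) \leq \tfrac12 |J(P_C)|$, again with equality exactly when $ab \in A$. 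Combining these:
\begin{align*}
\{ab \in \Inv(w) : \Sigma_\voting(ab) > \tfrac{1}{2}|\voting|\} &= I \setminus A, \\
\{ab \in \Inv(w) : \Sigma_\voting(ab) \geq \tfrac{1}{2}|\voting|\} &= I.
\end{align*}

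Finally I would quote Theorem~\ref{thm:calcuating-maj-rule}: the displayed sets are precisely $\Inv(u)$ and $\Inv(v)$ for the two permutations $u,v \in \prefix(C)$ whose intersection $<_u \cap <_v$ realizes $\majrule{\voting}$. This yields the claimed identification $\Inv(u) = I \setminus A$ and $\Inv(v) = I$. The only thing worth a brief sanity check is that $I \setminus A$ really is an order ideal of $P_C$ (so that a permutation $u$ with this inversion set exists); but this is automatic from Theorem~\ref{thm:calcuating-maj-rule} via Proposition~\ref{prop:strictly-decreasing}(iii), and it can also be seen directly: any element of $I$ covered by an element of $A$ in $P_C$ would violate the fact that $A$ lies on the fold while $\varphi$ reverses order. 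There is no substantive obstacle here — the hard work has already been done in establishing Corollary~\ref{cor:fold-corollary} and Theorem~\ref{thm:calcuating-maj-rule}, and this theorem is the clean payoff.
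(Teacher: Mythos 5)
Your proposal is correct and follows exactly the route the paper intends: the paper's proof of Theorem~\ref{thm:fold-result} is precisely the one-line combination of Corollary~\ref{cor:fold-corollary} (via Proposition~\ref{prop:uniform-vote-tally-function} to identify $\Sigma_\voting$ with $\Sigma$) and Theorem~\ref{thm:calcuating-maj-rule}, which you have simply spelled out in full detail. The identifications of the strict and weak superlevel sets with $I \setminus A$ and $I$ are exactly as the authors intend, so there is nothing to add.
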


Example~\ref{ex:intro-example-4} illustrated Theorem~\ref{thm:fold-result} for the running example Section~\ref{sec:intro}.  The next section applies Theorem~\ref{thm:fold-result} to
several well-known Condorcet domains of tiling type.

\section{Examples}
\label{sec:examples}

In this section, we apply Theorem~\ref{thm:fold-result} to compute, explicitly, the majority relation $\majrule{\voting}$
for the uniform vote tally $\voting = \one_{\prefix(C)}$ 
on several well-known Condorcet domains $\prefix(C)$ 
of tiling type.  In fact,
some of these will be {\it maximal} Condorcet domains of tiling type, meaning that $C=C(\ii)$ is a commutation class for a reduced word $\ii$ of the {\it longest permutation} $w_0=(n,n-1,\ldots,3,2,1)$.  Such commutation classes also appear in the literature under the guise of 
\begin{itemize}
   \item {\it reflection networks} (see Knuth \cite[\S 8]{Knuth}), 
   \item {\it sorting networks} (see Angel, Holroyd, Romik, and Virag~\cite{AngelHolroydRomikVirag}), and
\item the {\it higher Bruhat order} $B(n,2)$ (see Manin and Schechtman~\cite{ManinSchechtman}, Ziegler~\cite{Ziegler}.
\end{itemize}

\subsection{Totally ordered heap posets and cocktail-shaker sorts}

We begin with an application of Theorem~\ref{thm:fold-result} to a particularly restrictive category of commutation class; namely, when the class $C(\ii)$ consists of a single element: $C(\ii) = \{\ii\}$. 

These singleton commutation classes were characterized by Tenner in \cite{Tenner-OneElement}. In the language of Section~\ref{sec:heaps}, they are exactly the classes $C := C(\ii)$ whose heap posets $P_C$ are totally ordered. And of course, each element $w' \in \prefix(C)$ has this same property; equivalently, each order ideal in the totally ordered poset $P_C$ is also a totally ordered poset.

\begin{proposition}\label{prop:one-element class}
    Let $w$ be a permutation with reduced word $\ii = (i_1,\ldots, i_{\ell})$ such that $C:= C(\ii) = \{\ii\}$. Then the uniform vote tally $\voting = \one_{\prefix(C)}$ 
    on the Condorcet domain $\prefix(C)$ 
    has majority relation $\majrule{\voting}$ equal to $<_u \cap <_v$, where $u = s_{i_1}s_{i_2}\cdots s_{i_{\lfloor \ell/2\rfloor}}$ and $v=s_{i_1}s_{i_2}\cdots s_{i_{\lceil \ell/2\rceil}}$. In particular,
    \begin{itemize}
        \item if $\ell$ is even, then $\majrule{\voting}$ is $<_u$ for $u=s_{i_1}s_{i_2}\cdots s_{i_{\ell/2}}=v$
        and 
        \item if $\ell$ is odd, then $\majrule{\voting}$ is $<_u \cap <_v$ where $u=s_{i_1}s_{i_2}\cdots s_{i_{(\ell-1)/2}}$ and $v=s_{i_1}s_{i_2}\cdots s_{i_{(\ell+1)/2}}$.
    \end{itemize}
\end{proposition}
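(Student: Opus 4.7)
The plan is to apply Theorem~\ref{thm:fold-result} after exhibiting a horizontal folding symmetry on the chain. By the characterization in \cite{Tenner-OneElement}, the hypothesis $C(\ii) = \{\ii\}$ is equivalent to $P_C$ being totally ordered; write it as $x_1 <_{P_C} x_2 <_{P_C} \cdots <_{P_C} x_\ell$, where $x_k$ is the inversion $ab \in \Inv(w)$ introduced at position $k$ of $\ii$.

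Next I would define the order-reversing bijection $\varphi(x_k) := x_{\ell+1-k}$ and check the conditions of Definition~\ref{def:fold-symmetry} with $I := \{x_1, \ldots, x_{\lceil \ell/2 \rceil}\}$. Reversing a chain is automatically an involutive anti-automorphism, so only conditions (i)--(iii) need attention; these reduce to the inequality $k \leq \ell+1-k$ for $x_k \in I$, the identity $I \cup \varphi(I) = P_C$ (using $\ell+1-\lceil \ell/2 \rceil = \lfloor \ell/2 \rfloor + 1$), and the observation that $A := I \cap \varphi(I)$ is either empty (when $\ell$ is even) or the singleton $\{x_{(\ell+1)/2}\}$ (when $\ell$ is odd), so $A$ is trivially an antichain in either case.

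Theorem~\ref{thm:fold-result} then yields that $\majrule{\voting}$ equals the intersection $<_u \cap <_v$ with $\Inv(u) = I \setminus A$ and $\Inv(v) = I$. Translating via the bijection of Proposition~\ref{prop:ideals-in-heap-are-prefixes}, the order ideal consisting of the first $k$ elements of the chain is the inversion set of the prefix $s_{i_1} \cdots s_{i_k}$. Since $|I| = \lceil \ell/2 \rceil$ and $|I \setminus A| = \lfloor \ell/2 \rfloor$, this identifies $v = s_{i_1} \cdots s_{i_{\lceil \ell/2 \rceil}}$ and $u = s_{i_1} \cdots s_{i_{\lfloor \ell/2 \rfloor}}$. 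The two bulleted cases in the proposition then follow by splitting on the parity of $\ell$, since $\lfloor \ell/2 \rfloor = \lceil \ell/2 \rceil$ exactly when $\ell$ is even.

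I do not foresee any real obstacle here; the only small piece of care is ensuring that Tenner's singleton-class criterion is being invoked in exactly the form ``$C(\ii) = \{\ii\}$ iff $P_C$ is a chain.'' A self-contained alternative that avoids citing \cite{Tenner-OneElement} would compute $\Sigma_\voting(x_k) = \ell - k + 1$ directly from Proposition~\ref{prop:uniform-vote-tally-function} and then apply Theorem~\ref{thm:calcuating-maj-rule} with threshold $\frac{1}{2}|\voting| = \frac{\ell+1}{2}$, reaching the same conclusion with essentially the same parity case analysis.
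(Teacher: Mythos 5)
Your proof is correct and follows essentially the same route as the paper's: exhibit the reversal of the chain $P_C$ as a horizontal folding symmetry and invoke Theorem~\ref{thm:fold-result}, with the ideal $I$ of the first $\lceil \ell/2\rceil$ elements giving $\Inv(v)$ and $I\setminus A$ giving $\Inv(u)$. Your verification of Definition~\ref{def:fold-symmetry} is in fact slightly more careful than the paper's (which writes the involution with an off-by-one as $\varphi(s_{i_t})=s_{i_{\ell-t}}$ rather than $s_{i_{\ell+1-t}}$), and your alternative via $\Sigma_\voting(x_k)=\ell+1-k$ and Theorem~\ref{thm:calcuating-maj-rule} is also valid.
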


\begin{proof}
The heap poset $P_C$ is a chain with $\ell$ elements. Write $w^{(t)} := s_{i_1} \cdots s_{i_t}$ for $1 \le t \le \ell$, so $\prefix(C) = \{w^{(0)}:= e\,,\, w^{(1)}\,,\, \ldots\,,\, w^{(\ell)} = w\}$. We now define a horizontal folding symmetry $\varphi$ on $P_C$, and it will be easiest to do so if we label the elements of $P_C$ by adjacent transpositions. With that labeling, define $\varphi(s_{i_t}) := s_{i_{\ell-t}}$. The ideal $I$ weakly below the fold is then, necessarily, the set $\{s_{i_t} : t \le \ell/2\}$. Hence, by Theorem~\ref{thm:fold-result}, the uniform vote tally has as majority relation $\majrule{\voting}$ the intersection $<_u \cap <_v$ where $u = w^{(\lfloor \ell/2\rfloor)}$ and $v = w^{(\lceil \ell/2\rceil)}$.
\end{proof}

The previous result holds for any single-element class $C(\ii)$, with no further restrictions on the permutation $w$. One interesting family of examples, when $w$ is the longest permutation $w_0 = (n,n-1,\ldots,3,2,1)$ arises from the \emph{cocktail-shaker sort}.

\begin{example}\label{ex:cocktail-shaker} \rm
The \emph{cocktail-shaker sort}, discussed by Knuth in \cite[\S 8]{Knuth}, is a sorting algorithm that first moves the largest element to the correct position, then the smallest, then the second largest, then the second smallest, and so on. This procedure, and its symmetric analogues, yields the four reduced words for $w_0 = (n,n-1,\ldots, 3,2,1)$ that have one-element commutation classes, as described in \cite{Tenner-OneElement}. Let $n = 6$, and consider $\ii = (5,4,3,2,1,2,3,4,5,4,3,2,3,4,3)$.
The length of $\ii$ is $15$, so, by Proposition~\ref{prop:one-element class}, the majority relation $\majrule{\voting}$ on $\prefix(C(\ii))$ is $<_u \cap <_v$, where
$$
    u=s_5s_4s_3s_2s_1s_2s_3 = 623145 \text{ \ and \ }
    v= us_4 = 623415.
$$
Thus we can abbreviate this prelinear order as 
$$6\,2\,3\,\{1\,4\}\,5.$$
On the other hand, if we use $\jj = (3,4,3,2,3,4,5,4,3,2,1,2,3,4,5)$, the reverse of $\ii$ and another reduced word that is its own commutation class, then the majority relation $\majrule{\voting}$ on $\prefix(C(\jj))$ is $<_u \cap <_v$, where
$$u = s_3s_4s_3s_2s_3s_4s_5 = 154362 \text{ \ and \ } v = us_4 = 154632.$$
Thus we can abbreviate this prelinear order as 
$$1\,5\,4\,\{3\,6\}\,2.$$
\end{example}

In contrast to Example~\ref{ex:cocktail-shaker}, we note that a reduced word arising from the cocktail-shaker sort of an arbitrary permutation (that is, not $w_0$) might not be in a one-element commutation class. For example, the cocktail-shaker sort of the permutation $2143 \in S_4$ produces $\ii = (1,3)$, and $C(\ii) = \{(1,3),(3,1)\}$.

\subsection{The lex-first commutation class}

Theorem~\ref{thm:fold-result} can be applied to other commutation classes for the longest permutation $w_0$, as well, not only the few addressed by Proposition~\ref{prop:one-element class}. Here we consider the class containing the lexicographically minimal reduced word for $w_0$ in $S_n$:
\begin{align*}
\lf := (&1,\\
&2,\ 1,\\
&3,\ 2,\ 1,\\
&\vdots\\
&n-1,\ \ldots,\ 3,\ 2,\ 1).
\end{align*}

\begin{proposition}
\label{prop:lex-least word for long element}
    Let $w_0 = (n,n-1, \ldots, 3,2,1)$ be the longest permutation in $S_n$, define $\lf$ as above, and set $C:= C(\lf)$. 
    Then the uniform vote tally $\voting = \one_{\prefix(C)}$ 
    on the Condorcet domain $\prefix(C)$ 
    has majority relation $\majrule{\voting}$ equal to 
    \begin{align*}
        &\left\{\left(\frac{n}{2}\right)\  \left(\frac{n}{2} + 1\right)\right\} \ \cdots\ \{3\ (n-2)\}\ \{2\ (n-1)\}\ \{1\ n\}& & \text{if $n$ is even,}\\
        &\left(\frac{n+1}{2}\right)\ \left\{\left(\frac{n-1}{2}\right) \ \left(\frac{n+3}{2}\right)\right\} \ \cdots\ \{3 \ (n-2)\}\ \{2 \ (n-1)\}\ \{1 \ n\}& & \text{if $n$ is odd.}
    \end{align*}
\end{proposition}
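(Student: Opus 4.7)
The plan is to identify the heap poset $P_C$ for $C = C(\lf)$ explicitly as a product-like poset on $\Inv(w_0)$, exhibit a horizontal folding symmetry on it, and then read off the majority relation via Theorem~\ref{thm:fold-result}.

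The first and main step is to prove the explicit coordinate-wise description of $P_C$: regarded as a poset on $\Inv(w_0)$, one has $(a,b) \leq_{P_C} (c,d)$ if and only if $a \leq c$ and $b \leq d$. The key observation is that in $\lf$ the inversion $(a, b)$ occurs at position $\binom{b-1}{2} + a$ with label $s_{b-a}$. For the implication $(\Leftarrow)$, when $a < c$ and $b < d$ the chain $(a, b), (a, b+1), \ldots, (a, d), (a+1, d), \ldots, (c, d)$ consists of successive direct heap relations because consecutive labels differ by exactly one (the cases $a = c$ or $b = d$ are even easier). For $(\Rightarrow)$, since $\leq_{P_C}$ is the transitive closure of the direct relations, it suffices to check that every direct relation $(a, b) <_{P_C} (c, d)$ already satisfies $a \leq c$ and $b \leq d$: the position formula rules out $b > d$; the case $b = d$ immediately yields $a < c$; and when $b < d$, the non-commutation condition $|(b-a) - (d-c)| \leq 1$ forces $d - c \leq b - a + 1$, so $c \geq a + (d - b) - 1 \geq a$.

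With this coordinate-wise description, $\varphi(a,b) := (n+1-b,\, n+1-a)$ is visibly an involutive anti-automorphism of $P_C$, with fixed-point set $\{(a, n+1-a) : 1 \leq a \leq \lfloor n/2 \rfloor\}$. Taking $I := \{(a,b) \in P_C : a + b \leq n+1\}$, every condition of Definition~\ref{def:fold-symmetry} reduces to elementary arithmetic: $I$ is an order ideal (since $(c,d) \leq (a,b)$ forces $c + d \leq a + b$); $x \leq_{P_C} \varphi(x)$ for $x \in I$ is equivalent to $a + b \leq n+1$; $I \cup \varphi(I) = P_C$; and $A := I \cap \varphi(I) = \{(a, n+1-a) : 1 \leq a \leq \lfloor n/2 \rfloor\}$ is an antichain by the coordinate-wise criterion.

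Theorem~\ref{thm:fold-result} then gives $\majrule{\voting} = {<_u} \cap {<_v}$ with $\Inv(u) = \{(a,b) : a+b \leq n\}$ and $\Inv(v) = \{(a,b) : a+b \leq n+1\}$. To close, I would reconstruct the one-line notations of $u$ and $v$ by computing, for each $c \in [n]$, its position as $1$ plus the number of $c' \neq c$ with $c' <_u c$ (respectively $c' <_v c$); this is a brief case split on $c' < c$ versus $c' > c$ using the explicit inversion conditions. Comparing the two, $u$ and $v$ agree on every pairwise comparison except within each pair $\{j, n+1-j\} \in A$, where they disagree; hence $\majrule{\voting}$ is the prelinear order whose blocks are these pairs (together with the singleton $\{(n+1)/2\}$ when $n$ is odd), listed in the order dictated by $u$. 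This reproduces the formulas in the proposition. The main obstacle is the characterization of $P_C$ in the first step; once that is in place, all subsequent verifications are elementary.
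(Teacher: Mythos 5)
Your proposal is correct and follows essentially the same route as the paper: identify $P_C$ as the staircase poset on $\Inv(w_0)$ ordered coordinate-wise, apply the horizontal folding symmetry $\varphi(ab)=(n+1-b)(n+1-a)$ with ideal $I=\{ab: a+b\le n+1\}$ and antichain $A=\{ab: a+b=n+1\}$, and invoke Theorem~\ref{thm:fold-result}. The only difference is one of detail: you prove the coordinate-wise description of $P_C$ explicitly where the paper simply asserts the staircase shape and its covering relations, and conversely the paper writes out the one-line notations of $u$ and $v$ that you leave as a routine computation.
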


\begin{proof}
    The heap poset $P_C$ has the shape of a rotated staircase. 
    It will be easiest, in this case, to label the elements of $P_C$ by the unique inversion pair introduced by that element in the poset. 
    At each rank of $P_C$, the sum $a+b$ for all inversions $ab$ at that rank is fixed. Moreover, all possible pairs $ab$ for which $1 \le a < b \le n$ appear, and $ab$ is covered by $(a+1)b$ and $a(b+1)$ when these are valid inversion pairs. For example, when $n = 5$, this poset is as follows, with the fold indicated by a dashed line.
    $$\begin{tikzpicture}
        [scale=.16,auto=left,every node/.style={circle,fill=black!20}]
        \node (12) at (0,0) {$12$};
        \node (13) at (4,4) {$13$};
        \node (14) at (8,8) {$14$};
        \node (15) at (12,12) {$15$};
        \node (25) at (8,16) {$25$};
        \node (35) at (4,20) {$35$};
        \node (45) at (0,24) {$45$};
        \node (23) at (0,8) {$23$};
        \node (24) at (4,12) {$24$};
        \node (34) at (0,16) {$34$};
        \foreach \from/\to in {12/13,13/14,14/15,
        15/25,25/35,35/45,
        23/24,24/25,34/35,
        23/13,14/24,24/34}
        \draw (\from) -- (\to);
        \coordinate (leftfoldend) at (-8,12);
\coordinate (rightfoldend) at (20,12);
\draw[dashed] (leftfoldend) -- (rightfoldend);
    \end{tikzpicture}$$

    The poset $P_C$ has a horizontal folding symmetry $\varphi$ defined by $\varphi(ab) = (n+1-b)(n+1-a)$. 
    The ideal $I$ weakly below the fold in this symmetry consists of all elements $ab$ with $a+b \le n+1$, and the antichain $A \subset I$ is the set of elements $ab$ with $a+b = n+1$. Hence, by Theorem~\ref{thm:fold-result}, the uniform vote tally has majority relation $\majrule{\voting}$, the intersection $<_u \cap <_v $ with 
    $$u = \begin{cases}
        \left(\frac{n}{2}\right)\left(\frac{n}{2} + 1\right)\, \cdots \,3\,(n-2)\,2\,(n-1)\,1\,n& \text{if $n$ is even, and}\\
        \left(\frac{n+1}{2}\right)\left(\frac{n-1}{2}\right)\left(\frac{n+3}{2}\right) \,\cdots\, 3\,(n-2)\,2\,(n-1)\,1\,n& \text{if $n$ is odd;}
    \end{cases}$$
    $$v = \begin{cases}
        \left(\frac{n}{2} + 1\right)\left(\frac{n}{2}\right) \,\cdots\, (n-2)\,3\,(n-1)\,2\,n\,1& \text{if $n$ is even, and}\\
        \left(\frac{n+1}{2}\right)\left(\frac{n+3}{2}\right)\left(\frac{n-1}{2}\right) \,\cdots \,(n-2)\,3\,(n-1)\,2\,n\,1& \text{if $n$ is odd.}
    \end{cases}$$
    Note that, whatever the parity of $n$, the permutation $v$ is obtained from $u$ by swapping the rightmost two letters, the next rightmost two letters, and so on. In other words, the majority relation $<_u \cap <_v$ has $\lfloor n/2 \rfloor$ simple ties, between values $a$ and $n+1-a$, and these (including a ``trivial'' tie when $n$ is odd) appear from left to right in increasing order of their largest values.
\end{proof}

\subsection{Fishburn's alternating scheme, or the odd-even sort}

In this section, we explore the majority relation for commutation classes of reduced words for permutations related to powers of {\it bipartite Coxeter elements}. This is a family of commutation classes, each for a different permutation, one of which is the longest permutation $w_0$. To set up that work, fix an integer $n$, let $k:=\lfloor \frac{n-1}{2} \rfloor$, and define
\begin{align*}
\ceven&:=s_2 s_4 \cdots s_{2k},\\
\codd&:=s_1 s_3 \cdots s_{2k\pm 1}, \text{ and}\\
c\ &:=\codd \ceven=s_1 s_3 \cdots s_{2k\pm 1} \cdot s_2 s_4 \cdots s_{2k},
\end{align*}
so that $c$ is a product of all of the adjacent transpositions
$s_1,s_2,\ldots,s_{n-1}$ in a particular order, making $c$ an example of {\it Coxeter element} in $S_n$; see Humphreys \cite[\S 3.16]{Humphreys}. 
The particular Coxeter elements
of the form $c=\codd \ceven$
and $c^{-1}=\ceven \codd$
are often called {\it bipartite Coxeter elements}; see Reading \cite{Reading}.

The reduced words defined by these products are related to Fishburn's alternating scheme \cite{Fishburn1997, Fishburn2002} and Knuth's odd-even sort \cite[\S 8]{Knuth}. The permutations discussed below also appear in recent work of Banaian, Chepuri, Gunawan, Pan \cite[\S 7.2]{BanaianChepuriGunawanPan}, and this is related to Labb\'e and Lange's \cite[Example 4.1]{LabbeLange}.

\begin{proposition}\label{prop:fishburn alternating schemes for oddly many factors}
Fix positive integers $n$ and $p$ with $p \leq (n-1)/2$, and consider the permutation $w$ in $S_n$ given by
$$
w = c^p \codd = (\codd \ceven)^p \codd
=\underbrace{\codd \ceven \codd \ceven \cdots \codd \ceven \codd}_{2p+1 \text{ factors}},
$$
with $c$ and $\codd$ as defined above. Let $\ii$ be the reduced word defined by that product, and set $C := C(\ii)$.  
In particular, when $n$ is odd and $p=(n-1)/2$, this $w=c^{\frac{n-1}{2}}\codd=w_0$,
and $\prefix(C)$ is Fishburn's alternating scheme for odd $n$.

Then the uniform vote tally $\voting = \one_{\prefix(C)}$ 
has majority relation $\majrule{\voting}$ equal to $<_u \cap <_v$ where the unordered pair\footnote{More explicitly, this means $(u,v)=(c^{p/2}, c^{p/2} \codd)$ for $p$ even, and $(u,v)=(c^{(p-1)/2} \codd, c^{(p+1)/2})$ for $p$ odd.}
    $\{u,v\}=\{c^{\lceil p/2 \rceil}, c^{\lfloor p/2\rfloor}\codd\}$.
\end{proposition}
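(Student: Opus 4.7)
The plan is to apply Theorem~\ref{thm:fold-result} by exhibiting an explicit horizontal folding symmetry $\varphi$ on the heap poset $P_C$ whose ``fold'' passes through the middle factor in the $(2p+1)$-fold factorization of $\ii$.

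Label each element of $P_C$ by a pair $(j, i)$, where $j \in \{1, 2, \ldots, 2p+1\}$ is the factor index and $s_i$ is the corresponding generator at that position. This labeling is unambiguous because all generators within a single factor ($\codd$ or $\ceven$) pairwise commute, so within a factor all elements are mutually incomparable in $P_C$. Define $\varphi: P_C \to P_C$ by $\varphi(j, i) := (2p+2-j, i)$. It is well defined since $j$ and $2p+2-j$ have the same parity (so the factor types match and $s_i$ is present in both), and it is clearly an involution. To see that $\varphi$ is an antiautomorphism, note that $\varphi$ preserves the column index $i$ (hence preserves the non-commutation relation $|i_1-i_2| \leq 1$) while reversing the relative order of factor indices; any chain of covers in $P_C$ is therefore mapped to a chain with reversed order.

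Next, take $I := \{(j, i) \in P_C : j \leq p+1\}$. This is an order ideal, and the three axioms of Definition~\ref{def:fold-symmetry} are easily checked: $A := I \cap \varphi(I)$ equals the middle factor $\{(p+1, i)\}$, an antichain; $I \cup \varphi(I) = P_C$; and for $x = (j, i)$ with $j \leq p+1$ the relation $x \leq_{P_C} \varphi(x) = (2p+2-j, i)$ holds because both positions carry the generator $s_i$, and two occurrences of the same non-commuting generator in a reduced word are always comparable in the heap. Theorem~\ref{thm:fold-result} then identifies $\majrule{\voting}$ as $<_u \cap <_v$ with $\Inv(u) = I \setminus A$ (positions in the first $p$ factors) and $\Inv(v) = I$ (positions in the first $p+1$ factors). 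Reading these order ideals factor by factor yields $u = c_1 c_2 \cdots c_p$ and $v = c_1 c_2 \cdots c_{p+1}$, where $c_j = \codd$ for $j$ odd and $c_j = \ceven$ for $j$ even.

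A final case split on the parity of $p$ identifies these products with the claimed Coxeter-element powers. When $p$ is even, the $(p+1)$-st factor is $\codd$, giving $u = c^{p/2}$ and $v = c^{p/2}\codd$; when $p$ is odd, the $(p+1)$-st factor is $\ceven$, giving $u = c^{(p-1)/2}\codd$ and $v = u \cdot \ceven = c^{(p+1)/2}$. In either case the unordered pair is $\{c^{\lceil p/2 \rceil}, c^{\lfloor p/2 \rfloor}\codd\}$, matching the footnote. The main technical hurdle I anticipate is verifying that $\ii$ is indeed a reduced word for $w$ for every $p \leq (n-1)/2$, so that $P_C$ really has the stated number of elements and the grid-like structure used above; for $p = (n-1)/2$ with $n$ odd this reduces to the classical reduced word for $w_0$ underlying Fishburn's alternating scheme, and for general $p$ it follows from standard facts about powers of bipartite Coxeter elements, along the lines of \cite{BanaianChepuriGunawanPan}.
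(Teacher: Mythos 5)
Your proposal is correct and follows essentially the same route as the paper: both identify the horizontal folding symmetry of $P_C$ that reflects across the middle of the $2p+1$ factors (the paper phrases this via rank $t\mapsto 2p-t$, you via factor index $j\mapsto 2p+2-j$, which is the same map), apply Theorem~\ref{thm:fold-result} with $I$ the first $p+1$ factors and $A$ the middle factor, and read off $u,v$ as the products of the first $p$ and $p+1$ factors with the same parity case split. Your verification of the folding axioms is somewhat more explicit than the paper's, but the argument is the same.
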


\begin{proof}
    As in the previous examples, we consider the heap poset $P_C$ and apply Theorem~\ref{thm:fold-result}. In this case, it is most appealing to label the heap elements by adjacent transpositions. 
    The poset $P_C$ is self-dual, and it has rank $2p$.  
It has a horizontal folding symmetry $\varphi$ that maps the element $s_k$ at rank $t$ to the element $s_k$ at rank $2p-t$.
The heap poset $P_C$ is shown below, for $n=10$ and $p=3$, with the fold indicated by a dashed line.

    $$\begin{tikzpicture}
        [scale=.16,auto=left,every node/.style={circle,fill=black!20}]
        \node (00) at (0,0) {$s_1$};
        \node (20) at (8,0) {$s_3$};
        \node (40) at (16,0) {$s_5$};
        \node (60) at (24,0) {$s_7$};
        \node (80) at (32,0) {$s_9$};
        \node (11) at (4,4) {$s_2$};
        \node (31) at (12,4) {$s_4$};
        \node (51) at (20,4) {$s_6$};
        \node (71) at (28,4) {$s_8$};
        \node (02) at (0,8) {$s_1$};
        \node (22) at (8,8) {$s_3$};
        \node (42) at (16,8) {$s_5$};
        \node (62) at (24,8) {$s_7$};
        \node (82) at (32,8) {$s_9$};
        \node (13) at (4,12) {$s_2$};
        \node (33) at (12,12) {$s_4$};
        \node (53) at (20,12) {$s_6$};
        \node (73) at (28,12) {$s_8$};
        \node (04) at (0,16) {$s_1$};
        \node (24) at (8,16) {$s_3$};
        \node (44) at (16,16) {$s_5$};
        \node (64) at (24,16) {$s_7$};
        \node (84) at (32,16) {$s_9$};
        \node (15) at (4,20) {$s_2$};
        \node (35) at (12,20) {$s_4$};
        \node (55) at (20,20) {$s_6$};
        \node (75) at (28,20) {$s_8$};
        \node (06) at (0,24) {$s_1$};
        \node (26) at (8,24) {$s_3$};
        \node (46) at (16,24) {$s_5$};
        \node (66) at (24,24) {$s_7$};
        \node (86) at (32,24) {$s_9$};
        \foreach \from/\to in {00/11, 20/11, 20/31, 40/31, 40/51, 60/51, 60/71, 80/71, 11/02, 11/22, 31/22, 31/42, 51/42, 51/62, 71/62, 71/82, 02/13, 22/13, 22/33, 42/33, 42/53, 62/53, 62/73, 82/73, 13/04, 13/24, 33/24, 33/44, 53/44, 53/64, 73/64, 73/84, 04/15, 24/15, 24/35, 44/35, 44/55, 64/55, 64/75, 84/75, 15/06, 15/26, 35/26, 35/46, 55/46, 55/66, 75/66, 75/86}
        \draw (\from) -- (\to);
        \coordinate (leftfoldend) at (-8,12);
\coordinate (rightfoldend) at (40,12);
\draw[dashed] (leftfoldend) -- (rightfoldend);
    \end{tikzpicture}$$
The ideal $I$ weakly below the fold is the set of elements having rank less than or equal to $p$, and $A$ is the set of elements with rank $p$.

Thinking of $P_C$ as labeled by the inversions in $\Inv(w)$, the permutation $w'$ having
    $\Inv(w')$ equal to the set of inversions at ranks $0, 1, 2, \ldots, t$ is $c^{t/2}\codd$ when $t$ is even, and $c^{(t+1)/2}$ when $t$ is odd. Hence, by Theorem~\ref{thm:fold-result}, the uniform vote tally has majority relation $\majrule{\voting}$ equal to $<_u \cap <_v$, with $u$ and $v$ as claimed.
\end{proof}

To give a general formula for the majority relations described by Proposition~\ref{prop:fishburn alternating schemes for oddly many factors}, let $\{m,q\} = \{n-1,n\}$ so that $m$ is even. Then the majority relation of Proposition~\ref{prop:fishburn alternating schemes for oddly many factors} is 
\begin{multline}\label{eqn:fishburn oddly many terms}
    \ \ \cdots \ \{6 \ (2p-4)\} \ \{4 \ (2p-2)\} \ \{2 \ (2p)\} \\ 
    \{1 \ (2p+2)\} \ \{3 \ (2p+4)\} \ \{5 \ (2p+6)\} \ \cdots \ \{(m-2p-1) \ m\}\\ 
    \{(m-2p+1) \ q\} \ \{(m-2p+3) \ (q-2)\} \ \{(m-2p+5) \ (q-4)\} \ \cdots, \ \ 
\end{multline}
where $\{x \ x\}$ should be interpreted simply as ``$x$.'' The middle row of~\eqref{eqn:fishburn oddly many terms} gives the ``pattern'' of the relation (ties between odd $t$ and $t+2p+1$), while the left and right tails do their best to mimic the pattern for small even values (first row) and large odd values (third row). We compute the majority relation explicitly below, when $n = 16$ (and thus $(m,q) = (16,15)$) and $p \le 5$.
        \begingroup 
        \setlength{\tabcolsep}{20pt}
        \renewcommand{\arraystretch}{1.5}
        $$\begin{array}{|l|l|}
        \hline
        w & \majrule{\voting}\\
        \hline\hline
        \codd & \{1\ 2\}\ \{3\ 4\} \ \{5\ 6\}\  \{7\ 8\}\ \{9\ 10\}\ \{11\ 12\}\ \{13\ 14\}\ \{15\ 16\}\\
        \hline
        c^1\codd & 2\ \{1 \ 4\}\ \{3 \ 6\}\ \{5 \ 8\}\ \{7 \ 10\}\ \{9 \ 12\}\ \{11 \ 14\}\ \{13 \ 16\}\ 15 \\
        \hline
        c^2\codd & \{2 \ 4\} \ \{1 \ 6\} \ \{3 \ 8\} \ \{5 \ 10\} \ \{7 \ 12\} \ \{9 \ 14\} \ \{11 \ 16\} \ \{13 \ 15\} \\
        \hline
        c^3\codd & 4 \ \{2 \ 6\} \ \{1 \ 8\} \ \{3 \ 10\} \ \{5 \ 12\} \ \{7 \ 14\} \ \{9 \ 16\} \ \{11 \ 15\} \ 13 \\
        \hline
        c^4\codd & \{4 \ 6\} \ \{2 \ 8\} \ \{1 \ 10\} \ \{3 \ 12\} \ \{5 \ 14\} \ \{7 \ 16\} \ \{9 \ 15\} \ \{11 \ 13\}\\
        \hline
        c^5\codd & 6 \ \{4 \ 8\} \ \{2 \ 10\} \ \{1 \ 12\} \ \{3 \ 14\} \ \{5 \ 16\} \ \{7 \ 15\} \ \{9 \ 13\} \ 11\\
        \hline
        \end{array}$$
        \endgroup

In contrast to the reduced words defined by $c^p \codd$, the words defined by $c^p$ do not, in general, lead to heaps $P_C$ for which there is a horizontal folding symmetry.  Based on experimental evidence, we make the following conjecture for the majority relation in those cases.

\begin{conjecture}
\label{conj:here-be-dragons}
    Fix positive integers $n$ and $p$ with $p \le n/2$, and consider the permutation $w \in S_n$ given by 
\begin{equation}
\label{eq:dragon-powers-of-c}
w = c^p = (\codd\ceven)^p
    =\underbrace{\codd \ceven \codd \ceven \cdots \codd \ceven \codd}_{2p \text{ factors}},
\end{equation}
    with $c$ as above. Let $\ii$ be the reduced word defined by that product, and set $C := C(\ii)$. 
    In particular, when $n$ is even and $p=n/2$, this $w=c^{n/2}=w_0$,
and $\prefix(C)$ is Fishburn's alternating scheme for even $n$.

    Then the uniform vote tally $\voting = \one_{\prefix(C)}$ has majority relation $\majrule{\voting}$ equal to
    a total order $<_u$ where
    $u$ is the product of the leftmost $p$ factors in \eqref{eq:dragon-powers-of-c}; that is,
    $$u=\begin{cases}
        c^{(p-1)/2} \codd & \text{if $p$ is odd, and}\\
        c^{p/2} & \text{if $p$ is even.}
    \end{cases}$$
\end{conjecture}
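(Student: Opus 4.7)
The plan is to adapt the folding machinery of Section~\ref{sec:horizontal-folds}, noting that the heap $P_C$ for $w = c^p$ lacks a true horizontal folding symmetry---its rank sequence alternates between $|\codd|$ and $|\ceven|$, and is not palindromic when $n$ is even. The heap $P_C$ has $2p$ ranks, with $\codd$-generators at even ranks and $\ceven$-generators at odd ranks, and the natural candidate for the below-fold ideal is $I := \{x \in P_C : \operatorname{rank}(x) < p\}$, which has exactly $\ell(u)$ elements for the $u$ specified in the conjecture. The aim is to show $\Sigma_\voting(x) > \tfrac{1}{2}|\voting|$ for $x \in I$ and $\Sigma_\voting(x) < \tfrac{1}{2}|\voting|$ for $x \in P_C \setminus I$, with no element attaining equality.

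I would argue by induction on $p$. For $p = 1$, $P_C$ is the zigzag heap of $c$ with $\codd$ at rank $0$ and $\ceven$ at rank $1$; a bijective argument that toggles each minimal generator $s_i \in \codd$ in and out of ideals yields the identity
\[ 2\,\Sigma_\voting(s_i) \;=\; |J(P_C)| \;+\; \bigl|\{I \in J(P_C) : s_i \in I \text{ and some even neighbor of } s_i \text{ lies in } I\}\bigr|, \]
whose correction term is strictly positive, giving $\Sigma_\voting(s_i) > \tfrac{1}{2}|J(P_C)|$ directly. For the inductive step, decompose $c^p = c^{p-1}\codd \cdot \ceven$, so that $P_C$ is obtained from $Q := P_{c^{p-1}\codd}$ by attaching the new $\ceven$-generators $\ceven'$ at the top, with each new $s_{2k}$ covering the topmost copies of $s_{2k-1}$ and $s_{2k+1}$ in $Q$. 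Proposition~\ref{prop:fishburn alternating schemes for oddly many factors} supplies a true horizontal folding symmetry for $Q$ with fold at rank $p-1$, and the ideals of $P_C$ decompose according to which subset of $\ceven'$ is compatibly attached: for $x \in Q$,
\[ |J(P_C)| \;=\; \sum_{J \in J(Q)} \operatorname{ext}(J), \qquad \Sigma_\voting(x) \;=\; \sum_{\substack{J \in J(Q) \\ x \in J}} \operatorname{ext}(J), \]
where $\operatorname{ext}(J) = 2^{k(J)}$ records how many newly attached generators have all their prerequisites in $J$.

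The central analytic step is then to show that the correction $\sum_{J \ni x}(\operatorname{ext}(J)-1) - \tfrac{1}{2}\sum_J(\operatorname{ext}(J)-1)$ is strictly positive for every $x$ at rank $p-1$ in $Q$---these are the elements that sit exactly on the fold of $Q$ and must be pushed strictly below the new fold---nonnegative for elements at lower ranks, and compatibly negative for elements above; combined with a direct computation for the $\ceven'$-elements, this would establish the conjecture. The main obstacle, and presumably why the claim remains conjectural, is ruling out ties $\Sigma_\voting(x) = \tfrac{1}{2}|\voting|$: for $n$ odd the top and bottom halves of $P_C$ contain equally many elements, so a tie is \emph{a priori} plausible and must be excluded by a delicate combinatorial inequality, whereas for $n$ even the asymmetry between $|\codd|$ and $|\ceven|$ should supply the needed parity obstruction. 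Unifying the two parities of $n$ into a single argument, and carefully tracking the interplay between the fold of $Q$ and the freshly attached top row, appears to require genuinely new input beyond the techniques of Sections~\ref{sec:heaps}--\ref{sec:horizontal-folds}.
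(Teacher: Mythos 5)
First, note that the statement you are trying to prove is stated in the paper only as Conjecture~\ref{conj:here-be-dragons}: the authors explicitly say it is based on experimental evidence and offer no proof, so there is no argument in the paper to compare yours against. Evaluated on its own terms, your proposal is a proof \emph{strategy} rather than a proof, and you concede as much in your final paragraph. The setup is sound: the base case $p=1$ toggle identity $2\,\Sigma_\voting(s_i) = |J(P_C)| + (\text{correction})$ is correct (the injection $J \mapsto J \cup \{s_i\}$ from ideals omitting the minimal element $s_i$ into ideals containing it misses exactly those ideals containing $s_i$ together with some element covering it), and the decomposition $\Sigma_\voting(x) = \sum_{J \ni x} \operatorname{ext}(J)$ with $\operatorname{ext}(J) = 2^{k(J)}$ for the attachment of the top $\ceven$ row to $Q = P_{C(c^{p-1}\codd)}$ is a legitimate bookkeeping device, as is the use of Proposition~\ref{prop:fishburn alternating schemes for oddly many factors} to control $\#\{J \in J(Q) : x \in J\}$ relative to $\tfrac{1}{2}|J(Q)|$.

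The genuine gap is that the step you label ``central analytic'' is exactly the content of the conjecture, and you give no argument for it. After splitting $\operatorname{ext}(J) = 1 + (\operatorname{ext}(J)-1)$, the folding symmetry of $Q$ kills the first-order term for elements $x$ on the fold of $Q$, so everything rides on the strict inequality
$\sum_{J \ni x}(\operatorname{ext}(J)-1) > \tfrac{1}{2}\sum_{J}(\operatorname{ext}(J)-1)$
for those $x$; this is precisely the no-tie assertion that makes $\majrule{\voting}$ a total order rather than a prelinear order with simple ties, and it is asserted, not proved. The auxiliary claims (``nonnegative for elements at lower ranks, and compatibly negative for elements above,'' plus the ``direct computation for the $\ceven'$-elements'') are likewise unestablished, and the lower-rank claim is not even obviously the right statement --- what you need is that the combined first- and second-order contributions have the correct sign, and a negative second-order term could in principle overwhelm a small positive first-order term just below the fold. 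Until the inequality for fold elements is proved (uniformly in $n$ and $p$, for both parities of $n$), the proposal does not settle the conjecture; it reformulates it.
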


\noindent
We suspect this can be resolved by brute force computation of the entire tally function $\Sigma: P_C \rightarrow \{1,2,\ldots\}$.

\subsection{Diamond-shaped grid heaps}

Another family of heap posets where Theorem~\ref{thm:fold-result} is applicable consists of the diamond-shaped Cartesian products of two chains of equal length.

For a positive integer $k$, consider this reduced word $\ii_k$ for a permutation $w$ in $S_{2k}$:
\begin{align*}
\ii_k := (&k,\\
&k-1,\ k+1,\\
&k-2,\ k,\ k+2,\\
& \vdots \\
&2,\ 4,\ 6,\ 8, \ 10, \ \ldots,\ 2k-2, \\
&1,\ 3,\ 5,\ 7, \, 9, \ 11, \ \ldots, \ 2k-3, \ 2k-1, \\
&2,\ 4,\ 6,\ 8, \ 10, \ \ldots,\ 2k-2, \\
&\vdots\\
&k-2,\ k,\ k+2,\\
&k-1,\ k+1,\\
&k).
\end{align*}
For example, the heap poset $P_{C(\ii_4)}$ is as follows, with the fold indicated by a dashed line.
    $$\begin{tikzpicture}
        [scale=.16,auto=left,every node/.style={circle,fill=black!20}]
        \node (00) at (0,0) {$s_4$};
        \node (-11) at (-4,4) {$s_3$};
        \node (11) at (4,4) {$s_5$};
        \node (-22) at (-8,8) {$s_2$};
        \node (02) at (0,8) {$s_4$};
        \node (22) at (8,8) {$s_6$};
        \node (-33) at (-12,12) {$s_1$};
        \node (-13) at (-4,12) {$s_3$};
        \node (13) at (4,12) {$s_5$};
        \node (33) at (12,12) {$s_7$};
        \node (-24) at (-8,16) {$s_2$};
        \node (04) at (0,16) {$s_4$};
        \node (24) at (8,16) {$s_6$};
        \node (-15) at (-4,20) {$s_3$};
        \node (15) at (4,20) {$s_5$};
        \node (06) at (0,24) {$s_4$};
        \foreach \from/\to in {00/-11, -11/-22, -22/-33, 11/02, 02/-13, -13/-24, 22/13, 13/04, 04/-15, 33/24, 24/15, 15/06, 
        00/11, 11/22, 22/33, -11/02, 02/13, 13/24, -22/-13, -13/04, 04/15, -33/-24, -24/-15, -15/06}
        \draw (\from) -- (\to);
        \coordinate (leftfoldend) at (-20,12);
\coordinate (rightfoldend) at (20,12);
\draw[dashed] (leftfoldend) -- (rightfoldend);
    \end{tikzpicture}$$
Note that when $k > 1$, the permutation $w$ in $S_{2k}$ factored by $\ii_k$ is not the longest permutation $w_0$.

\begin{proposition}\label{prop:diamond-shaped heaps}
    Fix a positive integer $k$, define $\ii_k$ as above, and set $C := C(\ii_k)$. Then the uniform vote tally $\voting = \one_{\prefix(C)}$     on the Condorcet domain $\prefix(C)$ 
    has majority relation
    $$\{1 \ (k+1)\} \ \{ 2 \ (k+2)\} \ \{3 \ (k+3)\} \ \cdots \ \{k \ (2k)\}.$$
\end{proposition}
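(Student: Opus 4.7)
The plan is to recognize the heap poset $P_C$ as the Cartesian product $[k]\times[k]$ of two chains of length $k$, apply Theorem~\ref{thm:fold-result} using the obvious diamond flip as the horizontal folding symmetry, and then identify the resulting permutations $u$ and $v$ in closed form.

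First I would index the elements of $P_C$ by pairs $(i,j) \in [k]\times[k]$, assigning to $(i,j)$ the rank $i+j-2$ and the adjacent-transposition label $s_{k-i+j}$. The commuting and non-commuting pairs of letters in $\ii_k$ match the cover relations of the product poset, so $P_C \cong [k]\times[k]$. The underlying permutation $w$ is the block swap $(k+1,k+2,\ldots,2k,1,2,\ldots,k) \in S_{2k}$, and a glance at its wiring diagram yields the inversion labeling $(i,j) \leftrightarrow (a,b) := (k+1-i,\,k+j)$; in particular the rank of $(a,b)$ equals $b-a-1$, and $\Inv(w) = \{(a,b) : 1 \le a \le k < b \le 2k\}$ has the expected $k^2$ elements.

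Next I would exhibit the horizontal folding symmetry as the standard diamond flip $\varphi(i,j) = (k+1-i,\,k+1-j)$, which in inversion coordinates becomes $\varphi(a,b) = (2k+1-b,\,2k+1-a)$. This is manifestly an involutive antiautomorphism of $P_C$ satisfying the conditions of Definition~\ref{def:fold-symmetry}, with ideal weakly below the fold $I = \{(a,b) \in \Inv(w) : b-a \le k\}$ and antichain on the fold $A = \{(a,\,a+k) : 1 \le a \le k\}$.

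Finally, Theorem~\ref{thm:fold-result} expresses the majority relation $\majrule{\voting}$ as the intersection $<_u \cap <_v$ with $\Inv(u) = I \setminus A$ and $\Inv(v) = I$. I would then verify directly that these inversion sets are realized by the two interleaved permutations
$$
u = (1,\,k+1,\,2,\,k+2,\,\ldots,\,k,\,2k), \qquad v = (k+1,\,1,\,k+2,\,2,\,\ldots,\,2k,\,k),
$$
a routine check from the formulas $u^{-1}(j)=2j-1$, $u^{-1}(k+j)=2j$, $v^{-1}(k+j)=2j-1$, $v^{-1}(j)=2j$. Since $v$ is obtained from $u$ by transposing each consecutive pair of positions $(2j-1,2j)$ for $1 \le j \le k$, the intersection $<_u \cap <_v$ is precisely the prelinear order $\{1\,(k+1)\}\{2\,(k+2)\}\cdots\{k\,(2k)\}$ claimed in the proposition, with ties exactly within the blocks $\{j,\,k+j\}$. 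The main bookkeeping obstacle is nailing down the three compatible labelings of $P_C$; once the inversion labeling is fixed, identifying $u$ and $v$ and computing the intersection are direct.
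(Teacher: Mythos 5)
Your overall strategy is exactly the paper's: identify $P_C$ with the product of two $k$-element chains, fold it across the middle rank, and read off $u$ and $v$ from Theorem~\ref{thm:fold-result}. Your identification of $w$ as the block swap, the inversion labeling, the sets $I$ and $A$, the permutations $u$ and $v$, and the final computation of $<_u \cap <_v$ are all correct and agree with the paper's proof.

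However, the map you offer as the horizontal folding symmetry is not one. The map $\varphi(i,j)=(k+1-i,\,k+1-j)$ is the $180^{\circ}$ rotation of the diamond: it is an involutive antiautomorphism, but it violates Definition~\ref{def:fold-symmetry}(i). For instance it sends $(k,1)$ to $(1,k)$, two incomparable elements of the middle rank, so $x \leq_P \varphi(x)$ fails for $x=(k,1)$; and since condition (ii) forces one of $(k,1)$, $(1,k)$ into $I$, no choice of ideal can rescue this $\varphi$ once $k\geq 2$. (Your inversion-coordinate formula $\varphi(a,b)=(2k+1-b,\,2k+1-a)$ is a separate slip: under your own dictionary $(a,b)=(k+1-i,\,k+j)$ it does not correspond to $(i,j)\mapsto(k+1-i,\,k+1-j)$, and it is in fact an order-\emph{preserving} automorphism --- the left-right mirror of the diamond --- not an antiautomorphism, since it preserves rank.) The correct fold, and the one the paper uses (phrased there as preserving each label $s_j$ while reflecting rank $t \mapsto 2k-2-t$), is the transpose-reflection $(i,j)\mapsto(k+1-j,\,k+1-i)$, i.e.\ $(a,b)\mapsto(b-k,\,a+k)$ in inversion coordinates; this fixes the middle antichain $A=\{(a,\,a+k)\}$ pointwise and satisfies (i)--(iii) with your $I$. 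With that one substitution the rest of your argument goes through verbatim.
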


\begin{proof}
    The heap poset $P_C$ has the shape of a diamond, the Cartesian product of two $k$-element chains. It is easiest, in this case, to label the elements of $P_C$ by adjacent transpositions. The poset $P_C$ is self-dual and it has rank $2k-2$. 
    
    Define the horizontal folding symmetry $\varphi$ so that it maps the element $s_k$ at rank $t$ to the element $s_k$ at rank $2k-2-t$. The ideal $I$ weakly below the fold is the set of elements having rank less than or equal to $k-1$, and $A$ is the set of elements of rank $k-1$. Hence, by Theorem~\ref{thm:fold-result}, the uniform vote tally has majority relation equal to $<_u \cap <_v$, where $u$ has reduced word
    $(k,\ k-1,\ k+1,\ k-2,\ k,\ k+2,\ \ldots, \ 2,\ 4,\ 6,\ \ldots,\ 2k-2)$, and $v$ has reduced word $(k,\ k-1,\ k+1,\ k-2,\ k,\ k+2,\ \ldots, \ 2,\ 4,\ 6,\ \ldots,\ 2k-2, \ 1,\ 3,\ 5,\ \ldots,\ 2k-1)$. In other words, omitting commas, we have
    \begin{align*}
        u &= 1 \, (k+1) \, 2 \, (k+2) \, 3 \, (k+3) \, \cdots \, (2k-1) \, k \, (2k), \text{ and}\\
        v &= (k+1) \, 1 \, (k+2) \, 2 \, (k+3) \, 3 \, \cdots \, (k-1) \, (2k) \, k,
    \end{align*}
    and $<_u \cap <_v$ gives the claimed result.
\end{proof}

Continuing the example drawn above, when $k=4$, the majority relation is this prelinear order: 
$$\{1 \ 5\} \ \{2 \ 6\} \ \{3 \ 7\} \ \{4 \ 8\}.$$

\section{Reduction to indecomposable permutations}
\label{sec:disconnected-heaps}

This section points out an easy reduction in analyzing $\majrule{\voting}$ for the uniform vote tally $\voting=\one_{\prefix(C)}$
on a Condorcet domain $\prefix(C)$ of tiling type:
one need only consider {\it indecomposable permutations}, defined here.

\begin{definition} \rm
Given permutations $w^{(1)} \in S_{n_1}$ and $w^{(2)} \in S_{n_2}$, the \emph{direct sum} $w^{(1)} \oplus w^{(2)} \in S_{n_1+n_2}$ is
$$
w^{(1)} \oplus w^{(2)}:=
(w^{(1)}_1,w^{(1)}_2,\ldots,w^{(1)}_{n_1},
n_1+w^{(2)}_1,n_1+w^{(2)}_2,\ldots,n_1+w^{(2)}_{n_2}) 
$$
Call a permutation $w$ in $S_n$ {\it decomposable} if it has this form
$w=w^{(1)} \oplus w^{(2)}$ for some decomposition $n=n_1+n_2$ with $n_1,n_2 \in \{1,2,\ldots\}$.  In other words, a permutation 
$w$ is decomposable if it lies in a proper {\it Young subroup} $S_{n_1} \times S_{n_2}$, permuting the
two blocks
$\{1,2,\ldots,n_1\}$ and 
$\{n_1+1,n_1+2,\ldots,n_1+n_2\}$
independently.
Otherwise, when $w$ is not decomposable, call it {\it indecomposable}.
\end{definition}

Decomposability of $w$ is easily detected from any of its
reduced words or commuting classes as follows.

\begin{definition} \rm
For $w$ in $S_n$, define its {\it $S$-support} 
to be the
subset $S(w):=\{s_{i_1},s_{i_2},\ldots,s_{i_\ell}\}$ (without repetitions) of the adjacent transpositions $S:=\{s_1,s_2,\ldots,s_{n-1}\}$ appearing in any of the shortest factorizations 
$w=s_{i_1} s_{i_2} \cdots s_{i_\ell}$. The set $S(w)$ is well-defined, independent of the factorization, by the
theorem of Matsumoto and Tits \cite[Thm.~3.3.1]{BjornerBrenti} mentioned in Section~\ref{sec:intro}:
any two shortest factorizations are connected by a sequence of
moves~\eqref{eq:commuting-relation} and~\eqref{eq:Yang-Baxter-relation}.
\end{definition}

The following result is well-known (see, for example, \cite[Lemma~2.8]{Tenner-Repetition}), but we include a short proof here for the sake of completeness.

\begin{proposition}
A permutation $w$ in $S_n$ is decomposable if and only if $S(w) \subsetneq S$ is a proper subset. 
\end{proposition}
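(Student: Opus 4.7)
The plan is to prove both implications directly, using the standard description of adjacent transpositions acting on $[n]$ and noting that $S(w)$ is well-defined independently of the choice of reduced word.

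For the forward direction, suppose $w = w^{(1)} \oplus w^{(2)}$ with $w^{(1)} \in S_{n_1}$ and $w^{(2)} \in S_{n_2}$. Choose reduced words $\ii^{(1)} = (i_1,\ldots,i_{\ell_1})$ for $w^{(1)}$ (using entries in $\{1,\ldots,n_1-1\}$) and $\ii^{(2)} = (j_1,\ldots,j_{\ell_2})$ for $w^{(2)}$ (using entries in $\{1,\ldots,n_2-1\}$). Then the concatenation $(i_1,\ldots,i_{\ell_1}, n_1+j_1,\ldots,n_1+j_{\ell_2})$ is a reduced word for $w$, whose letters all avoid $n_1$. Hence $s_{n_1} \notin S(w)$, so $S(w) \subsetneq S$.

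For the reverse direction, suppose $s_k \notin S(w)$ for some $k \in \{1,\ldots,n-1\}$, and fix a reduced word $w = s_{i_1} s_{i_2} \cdots s_{i_\ell}$ with all $i_t \neq k$. Viewing each $s_i$ as the function on $[n]$ swapping $i \leftrightarrow i+1$, note that any such $s_i$ with $i \neq k$ preserves the blocks $B_1 := \{1,\ldots,k\}$ and $B_2 := \{k+1,\ldots,n\}$ setwise. Hence so does the composition $w$, meaning $w(B_1) = B_1$ and $w(B_2) = B_2$. Setting $w^{(1)} := w|_{B_1} \in S_k$ and $w^{(2)}$ to be $w|_{B_2}$ re-indexed to $\{1,\ldots,n-k\}$ (by subtracting $k$), one has $w = w^{(1)} \oplus w^{(2)}$, so $w$ is decomposable.

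The main thing to be careful about is the well-definedness of $S(w)$, but this is already handled by the Matsumoto--Tits theorem invoked in the definition: both commuting moves and Yang--Baxter moves preserve the set of adjacent transpositions appearing in a reduced word. There is no substantive obstacle — the argument is essentially an unwinding of definitions — so a short, self-contained proof along the lines above should suffice.
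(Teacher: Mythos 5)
Your proof is correct and follows essentially the same route as the paper's: the forward direction exhibits the concatenated (shifted) reduced word avoiding $s_{n_1}$, and the reverse direction observes that a permutation whose reduced words avoid $s_k$ lies in the Young subgroup $S_k \times S_{n-k}$ (you merely spell out the block-preservation argument that the paper states in one line). No gaps.
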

\begin{proof}
Note that when $w = w^{(1)} \oplus w^{(2)}$,
if one chooses shortest factorizations 
$w^{(1)}=s_{1_1} s_{i_2} \cdots s_{i_{\ell}}$ and
$w^{(2)}=s_{j_1} s_{j_2} \cdots s_{j_{\ell'}}$
then one has a shortest factorization
$
w=s_{1_1} s_{i_2} \cdots s_{i_{\ell}} \cdot 
s_{n_1+j_1} s_{n_1+j_2} \cdots s_{n_1+j_{\ell'}},
$
showing that $s_{n_1} \in S \setminus S(w)$.
Conversely, if $s_{n_1} \in S \setminus S(w)$, then $w$ lies in $S_{n_1} \times S_{n_2}$ and hence is decomposable.
\end{proof}

As a consequence, when one factors a decomposable permutation
$w=w^{(1)} \oplus w^{(2)}$ in $S_{n_1} \times S_{n_2}$, any commuting class $C$ of reduced words
for $w$ is the set of all shuffles of two commuting classes $C^{(1)},C^{(2)}$ for reduced words of
$w^{(1)}, w^{(2)}$, respectively.  One has an accompanying decomposition
of the heap poset as a disjoint union 
$P_C = P_{C^{(1)}} \sqcup P_{C^{(2)}}$,
in which one has applied the relabeling $s_i \mapsto s_{i+n_1}$ to $P_{C^{(2)}}$.
Order ideals $I \subseteq P_C$ have an
accompanying decomposition
$I = I_1 \sqcup I_2$, making the distributive
lattice $J(P_C)$ a Cartesian product
$J(P_C) \cong  J(P_{C^{(1)}}) \times J(P_{C^{(2)}})$.

\begin{example} \rm
\label{ex:intro-example-5}
The permutation $w=3412756$ in $S_7$ from
Examples~\ref{ex:intro-example},
\ref{ex:intro-example-2}, \ref{ex:intro-example-3}, and~\ref{ex:intro-example-4} is decomposable as $w=3412 \oplus312$ in $S_4 \times S_3$,
corresponding to its disconnected heap poset
$P_C = P_{C^{(1)}} \sqcup P_{C^{(2)}}$.
Correspondingly, $J(P_C) \cong  J(P_{C^{(1)}}) \times J(P_{C^{(2)}})$, where
$J(P_{C^{(1)}})$ and $J(P_{C^{(2)}})$ are shown in Figure~\ref{fig:disconnected heap poset}.

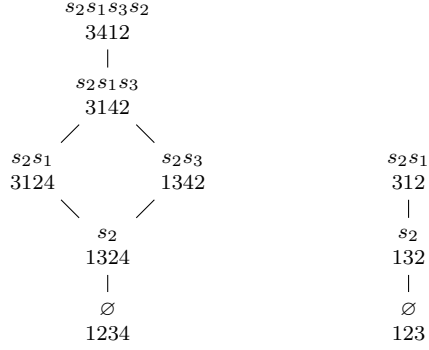
\begin{figure}[htbp]
\begingroup\Large\begin{tikzpicture}
  [scale=.2,auto=left]
  \node (e) at (0,0) {$\substack{\varnothing\\[.5ex]1234}$};
  \node (2) at (0,5) {$\substack{s_2\\[.5ex]1324}$};
  \node (21) at (-5,10) {$\substack{s_2s_1\\[.5ex]3124}$};
  \node (23) at (5,10) {$\substack{s_2s_3\\[.5ex]1342}$};
  \node (213) at (0,15) {$\substack{s_2s_1s_3\\[.5ex]3142}$};
  \node (2132) at (0,20) {$\substack{s_2s_1s_3s_2\\[.5ex]3412}$};
  \foreach \from/\to in {e/2,  2/21,2/23,21/213,23/213,
213/2132}
    \draw (\from) -- (\to);
  \node (e) at (20,0) {$\substack{\varnothing\\[.5ex]123}$};
  \node (6) at (20,5) {$\substack{s_2\\[.5ex]132}$};
  \node (65) at (20,10) {$\substack{s_2s_1\\[.5ex]312}$};
  \foreach \from/\to in {e/6,6/65}
    \draw (\from) -- (\to);
\end{tikzpicture}
\caption{The distributive lattices $J(P_{C^{(1)}})$ and $J(P_{C^{(2)}})$
corresponding to the components $P_{C^{(1)}}$ and $P_{C^{(2)}}$ of the heap poset $P_C$ for the
commuting class $C=C(\ii)$ with $\ii=(2,1,3,2,6,5)$. This $\ii$ is a reduced word for the decomposable permutation $w=3412756=3412 \oplus 312=$ in $S_4 \times S_3$. The lattice $J(P_C)$ from Figure~\ref{fig:distributive lattice of 3412756} has
$J(P_C) \cong 
J(P_{C^{(1)}}) \times J(P_{C^{(2)}})$.} \label{fig:disconnected heap poset}
\endgroup
\end{figure}   
\end{example}

The next proposition should come as no surprise.

\begin{proposition}
\label{prop:Fubini-for-decomposables}
Let $w=w^{(1)} \oplus w^{(2)}$ be a decomposable permutation, and $C$ a 
commuting class of reduced words for $w$, with $C^{(1)}$ and $C^{(2)}$ the corresponding commuting classes
of reduced words for $w^{(1)}$ and $w^{(2)}$, respectively.
Let $\prefix(C)$, $\prefix(C^{(1)})$, and $\prefix(C^{(2)})$
be their corresponding Condorcet domains of tiling type,
and $\voting$, $\voting^{(1)}$, and $\voting^{(2)}$ the uniform
vote tallies on each of these domains.

For $i=1,2$, let $u^{(i)}$ and $v^{(i)}$ be the permutations defined by Equations~\eqref{eqn:defining u as strict} and~\eqref{eqn:defining v as weak} in Theorem~\ref{thm:calcuating-maj-rule}, so that the uniform vote tally $\voting^{(i)}=\one_{\prefix(C^{(i)})}$ has majority rule $\majrule{\voting^{(i)}}$
equal to the intersection $<_{u^{(i)}} \cap <_{v^{(i)}}$.

Then the uniform vote tally $\voting=\one_{\prefix(C)}$ has $\majrule{\voting}$ equal to the intersection
$<_u \cap <_v$, where
\begin{align}
\label{eq:u-decomposition}    
u&=u^{(1)} \oplus u^{(2)} \text{ and}\\
\label{eq:v-decomposition}
v&=v^{(1)} \oplus v^{(2)}.
\end{align}
\end{proposition}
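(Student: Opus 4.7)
The plan is to apply Theorem~\ref{thm:calcuating-maj-rule} directly to $\voting = \one_{\prefix(C)}$, using the product decomposition $J(P_C) \cong J(P_{C^{(1)}}) \times J(P_{C^{(2)}})$ recalled just before the proposition. By the uniqueness clauses in~\eqref{eqn:defining u as strict} and~\eqref{eqn:defining v as weak}, it suffices to show that $u^{(1)} \oplus u^{(2)}$ and $v^{(1)} \oplus v^{(2)}$ have the inversion sets prescribed by Theorem~\ref{thm:calcuating-maj-rule}, namely the strict and weak superlevel sets of $\Sigma_\voting$ at $\tfrac{1}{2}|\voting|$.

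First I would record two easy structural observations. Since $w = w^{(1)} \oplus w^{(2)}$ keeps the values $\{1,\ldots,n_1\}$ below the values $\{n_1+1,\ldots,n\}$, the inversion set splits as a disjoint union $\Inv(w) = \Inv(w^{(1)}) \sqcup (\Inv(w^{(2)})\text{ shifted by }n_1)$; in particular, every element $ab \in P_C$ lies in exactly one of the two components $P_{C^{(1)}}$ or $P_{C^{(2)}}$. Likewise, for any $u^{(i)} \in \prefix(C^{(i)})$, one has $\Inv(u^{(1)} \oplus u^{(2)}) = \Inv(u^{(1)}) \sqcup (\Inv(u^{(2)})\text{ shifted})$, and this is an order ideal in $P_C$ since it is a disjoint union of order ideals in the two components.

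Next I would compute $\Sigma_\voting$ on each component using Proposition~\ref{prop:uniform-vote-tally-function} and the product structure of $J(P_C)$. For $ab \in P_{C^{(1)}}$, an ideal $I = I_1 \sqcup I_2$ of $P_C$ contains $ab$ if and only if $I_1$ does, which gives
\[
\Sigma_\voting(ab) = |\{I \in J(P_C) : ab \in I\}| = \Sigma_{\voting^{(1)}}(ab) \cdot |J(P_{C^{(2)}})| = \Sigma_{\voting^{(1)}}(ab)\cdot |\voting^{(2)}|,
\]
and symmetrically for $ab \in P_{C^{(2)}}$. Combined with the identity $|\voting| = |J(P_C)| = |\voting^{(1)}|\cdot|\voting^{(2)}|$, the inequality $\Sigma_\voting(ab) > \tfrac{1}{2}|\voting|$ is equivalent, after dividing by $|\voting^{(2)}|$, to $\Sigma_{\voting^{(1)}}(ab) > \tfrac{1}{2}|\voting^{(1)}|$; the analogous equivalence holds with $\geq$ in place of $>$, and for $P_{C^{(2)}}$.

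Combining these computations with the defining equations~\eqref{eqn:defining u as strict} and~\eqref{eqn:defining v as weak} applied to each $\voting^{(i)}$, the strict superlevel set of $\Sigma_\voting$ at $\tfrac{1}{2}|\voting|$ equals $\Inv(u^{(1)}) \sqcup (\Inv(u^{(2)})\text{ shifted}) = \Inv(u^{(1)} \oplus u^{(2)})$, and the weak one equals $\Inv(v^{(1)} \oplus v^{(2)})$. By the uniqueness in Theorem~\ref{thm:calcuating-maj-rule}, this identifies $u = u^{(1)} \oplus u^{(2)}$ and $v = v^{(1)} \oplus v^{(2)}$, giving~\eqref{eq:u-decomposition} and~\eqref{eq:v-decomposition}. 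The only nontrivial bookkeeping is keeping track of the $n_1$-shift on the second factor and verifying that $u^{(1)} \oplus u^{(2)}$ actually lies in $\prefix(C)$; the latter follows because its inversion set is visibly an order ideal in $P_C$, so the bijection of Proposition~\ref{prop:ideals-in-heap-are-prefixes} produces a witnessing prefix. No step here should present a genuine obstacle beyond this indexing care.
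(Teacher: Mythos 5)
Your proposal is correct and follows essentially the same route as the paper's proof: both use the decomposition $I = I_1 \sqcup I_2$ of ideals to get $|\voting| = |\voting^{(1)}|\cdot|\voting^{(2)}|$ and $\Sigma_\voting(ab) = \Sigma_{\voting^{(i)}}(ab)\cdot|\voting^{(j)}|$, then compare superlevel sets at $\tfrac{1}{2}|\voting|$ and invoke the uniqueness in Theorem~\ref{thm:calcuating-maj-rule}. The extra care you take with the $n_1$-shift and with checking that $u^{(1)}\oplus u^{(2)}$ lies in $\prefix(C)$ is implicit in the paper's argument but not a different idea.
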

\begin{proof}
For ideals $I$ in $J(P_C)$, the unique decomposition $I = I_1 \sqcup I_2$ with $I_i$
an ideal of $J(P_{C^{(i)}})$  implies
\begin{equation}
    \label{eq:total-vote-tally-product}
    |\voting|=|J(P_C)|=
    |J(P_{C^{(1)}})| \cdot |J(P_{C^{(2)}})|=
    |\voting^{(1)}| \cdot |\voting^{(2)}|.
\end{equation}
Given an inversion $ab$ in $
\Inv(w)=P_C =P_{C^{(1)}} \sqcup P_{C^{(2)}},
$
assume, without loss of generality, that $ab$ lies in $\Inv(w^{(1)})=P_{C^{(1)}}$.
Then one
has $ab \in I$ if and only if $ab \in I_1$.
Consequently
$$
\Sigma_\voting(ab)
=|\{I_1 \in J(P_{C^{(1)}})|
\cdot |J(P_{C^{(2)}})|
=\Sigma_{\voting^{(1)}}(ab) \cdot |\voting^{(2)}|.
$$
Comparing this with~\eqref{eq:total-vote-tally-product}, one sees that the various inequality comparison conditions $\geq, >, =$ between
$\Sigma_\voting(ab)$ and $\frac{1}{2}|\voting|$
occur if and only one has the same conditions between
$\Sigma_{\voting^{(1)}}(ab)$ and $\frac{1}{2}|\voting^{(1)}|$.  Therefore, the
unique $u,v$ and $u^{(i)},v^{(i)}$ 
given by Theorem~\ref{thm:calcuating-maj-rule}
for $C$ and $C^{(i)}$, for $i=1,2$, are related
as in Equations~\eqref{eq:u-decomposition} and~\eqref{eq:v-decomposition}.
\end{proof}
 
\begin{example} \rm
    Continuing the running Example~\ref{ex:intro-example-5},
    for $w=3412756=3412 \oplus 312$ and $C$ as before, one has $u^{(1)}=1324$ and $v^{(1)}=3142$,
    while $u^{(2)}=132=v^{(2)}$. Therefore
$$\begin{array}{ccccccc}
u&=&u^{(1)} \oplus u^{(2)}&=&1324 \oplus 132&=&1324576,\\
v&=&v^{(1)} \oplus v^{(2)}&=&3142 \oplus 132&=&3142576.
\end{array}$$
\end{example}

Proposition~\ref{prop:Fubini-for-decomposables} reduces the calculation of $\majrule{\voting}$
for $\voting = \one_{\prefix(C)}$ for domains $\prefix(C)$ 
of tiling type to those whose permutation $w$ is indecomposable.
This is a special case of a more general {\it Fubini-type} result with a similar proof, asserting the following.  

Assume one is given
voting tallies $\voting^{(i)}: S_{n_i} \rightarrow \NN$ for $i=1,2$, and define their
{\it product tally} $\voting: S_{n_1+n_2} \rightarrow \NN$ via
$
\voting(w)=0
$
for $w$ in $S_n \setminus S_{n_1} \times S_{n_2}$,
and $\voting(w)=\voting^{(1)}(w^{(1)}) \cdot  \voting^{(2)}(w^{(2)})$
if $w=w^{(1)} \oplus w^{(2)}$ in $S_{n_1} \times S_{n_2}$.

\begin{proposition}
In the above context, the majority relation $\majrule{\voting}$ on $[n]$ is the disjoint union of the relations $\majrule{\voting^{(i)}}$ on $[n_i]$ for $i=1,2$:  one has
$a \majrule{\voting} b$ if and only if 
\begin{itemize}
    \item either $1 \leq a,b \leq n_1$ and $a \majrule{\voting^{(1)}} b$ on $[n_1]$,
    \item
or $n_1+1 \leq a,b \leq n_1+n_2$ and
$a-n_1 \majrule{\voting^{(2)}} b-n_1$
on $[n_2]$.
\end{itemize}
\end{proposition}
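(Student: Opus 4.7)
The plan is a direct Fubini-style factorization of the two tallies appearing in the majority comparison. Because $\supp(\voting) \subseteq S_{n_1}\times S_{n_2}$, each contributing $w$ factors uniquely as $w=w^{(1)}\oplus w^{(2)}$ with $\voting(w)=\voting^{(1)}(w^{(1)})\,\voting^{(2)}(w^{(2)})$, so summation over $S_{n_1+n_2}$ collapses to summation over the Cartesian product $S_{n_1}\times S_{n_2}$. This immediately yields $|\voting|=|\voting^{(1)}|\cdot|\voting^{(2)}|$, and in particular $\tfrac{1}{2}|\voting|=|\voting^{(2)}|\cdot\tfrac{1}{2}|\voting^{(1)}|=|\voting^{(1)}|\cdot\tfrac{1}{2}|\voting^{(2)}|$.

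The heart of the argument is the same-block case. For $a,b\in[n_1]$, the relative order of $a$ and $b$ inside $w=w^{(1)}\oplus w^{(2)}$ is entirely controlled by $w^{(1)}$: one has $a<_w b$ if and only if $a<_{w^{(1)}}b$. Combining this with the product formula for $\voting$, the sum governing one side of the majority inequality factors as
\[
\sum_{\substack{w \in S_n:\\ a<_w b}}\voting(w) \;=\; |\voting^{(2)}|\sum_{\substack{w^{(1)} \in S_{n_1}:\\ a<_{w^{(1)}}b}}\voting^{(1)}(w^{(1)}),
\]
and analogously for the sum with $b<_w a$. Dividing by the positive factor $|\voting^{(2)}|$ and invoking the factorization of $\tfrac{1}{2}|\voting|$, the strict majority inequality for $\voting$ is equivalent to the strict majority inequality for $\voting^{(1)}$; that is, $a\majrule{\voting}b$ iff $a\majrule{\voting^{(1)}}b$. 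The case $a,b\in\{n_1+1,\dots,n_1+n_2\}$ is handled symmetrically, after shifting indices by $n_1$ and interchanging the roles of the two factors, giving $a\majrule{\voting}b$ iff $a-n_1\majrule{\voting^{(2)}}b-n_1$.

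For any pair $(a,b)$ not lying in a single block, the two factored sums collapse by block containment, since the first $n_1$ entries of every $w=w^{(1)}\oplus w^{(2)}$ in the support lie in $[n_1]$ and the remaining entries lie in $\{n_1+1,\dots,n_1+n_2\}$; this residual case is resolved purely by bookkeeping of the block structure of $\supp(\voting)$ rather than by a genuine inequality comparison coming from either $\voting^{(i)}$. I expect the only delicate point is keeping the surviving factor $|\voting^{(i)}|$ consistent through the same-block Fubini step; no deeper obstacle should arise, as the argument parallels that of Proposition~\ref{prop:Fubini-for-decomposables} but without invoking the tiling-type hypothesis or the $\voting$-tally function $\Sigma_\voting$.
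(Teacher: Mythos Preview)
The paper does not actually prove this proposition; it only asserts that the result has ``a similar proof'' to Proposition~\ref{prop:Fubini-for-decomposables}. Your Fubini factorization is exactly the argument the paper intends, and your treatment of the same-block case is correct: for $a,b\in[n_1]$ the sum $\sum_{w:\,a<_w b}\voting(w)$ factors as $|\voting^{(2)}|\sum_{w^{(1)}:\,a<_{w^{(1)}}b}\voting^{(1)}(w^{(1)})$, and comparing with $\tfrac12|\voting|=|\voting^{(2)}|\cdot\tfrac12|\voting^{(1)}|$ reduces the strict inequality to the one for $\voting^{(1)}$ (implicitly using $|\voting^{(2)}|>0$, which you should state). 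The second block is symmetric.

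The cross-block case, however, is a genuine gap---and one that the proposition as stated seems to mishandle. If $a\in[n_1]$ and $b\in\{n_1+1,\dots,n\}$, then every $w=w^{(1)}\oplus w^{(2)}$ in the support has $a<_w b$; your observation that the sums ``collapse'' is correct, but they collapse to $|\voting|$ and $0$, which gives $a\majrule{\voting}b$ whenever $|\voting|>0$, \emph{not} the absence of a relation that the ``if and only if'' asserts. The paper's own running example bears this out: for $w=3412\oplus 312$ with blocks $\{1,2,3,4\}$ and $\{5,6,7\}$, the computed majority relation $\{1\,3\}\{2\,4\}\,5\,7\,6$ in Example~\ref{ex:intro-example-2} plainly contains cross-block relations such as $4\majrule{\voting}5$. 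So your phrase ``resolved purely by bookkeeping'' cannot be sharpened into a verification of the stated claim; the proposition evidently needs a third bullet for $a\in[n_1]$, $b\in\{n_1+1,\dots,n\}$, after which your argument goes through cleanly.
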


\section{A question on the higher Bruhat order $B(n,2)$}
\label{sec:remarks-and-questions}

We mention here some data and a question suggested by the previous results, related to the \emph{higher Bruhat order} $B(n,2)$. 
The higher Bruhat orders $B(n,k)$ first appeared in work of Manin and Schechtman~\cite{ManinSchechtman}, and Ziegler's study \cite{Ziegler} of them is particularly informative for our discussion here. We are interested in the case $k=2$, which enjoys some extra properties proven by Felsner and Weil in \cite{FelsnerWeil}. 

First define the set $A(n,2)$ as the set of all linear orderings $<$ of pairs $\{ \{a,b\} : 1 \le a < b \le n\}$ in which, for each triple $a < b < c$, the linear ordering either has
$$\{a,b\} < \{a,c\} < \{b,c\},$$
or it has
$$\{b,c\} < \{a,c\} < \{a,b\}.$$
In the latter case, we say that $abc$ is an \emph{inversion triple} for the ordering $<$. The set $A(n,2)$ is in bijection with the set of reduced words $\ii=(i_1,i_2,\ldots,i_{\binom{n}{2}})$ for the longest permutation $w_0=(n,n-1,\ldots,3,2,1)$ in $S_n$:  the pair $(a,b)$ appears at position $j$ in the linear order $<$ if 
$\Inv(s_{i_1}s_{i_2} \cdots s_{i_{j-1}} s_{i_j}) \setminus  \Inv(s_{i_1}s_{i_2} \cdots s_{i_{j-1}}) = \{(a,b)\}$.  Said differently, an element of $A(n,2)$ describes an ordered sequence of inversions transforming the identity into the longest permutation $w_0$. 

The poset $B(n,2)$ may be identified
as a set with the collection of all commuting equivalence classes $C(\ii)$ of reduced words for $w_0$ in $S_n$.  There are two ways to define its poset structure, beginning with the fact \cite[Corollary~2.3]{Ziegler} that one has equality $C=C'$ for the commuting equivalence classes $C$ and $C'$ if and only if their associated orderings $<$ and $<'$ on the pairs $(a,b)$
have the same set of inversion triples;  call this set of inversion triples $\Inv_3(C)$. In  \cite{FelsnerWeil}, Felsner and Weil answer a question of Ziegler by showing that these two poset structures
define the same partial order on this set $B(n,2)$:
\begin{itemize}
\item[(a)] Define $C < C'$ if and only if
$\Inv_3(C) \subset \Inv_3(C')$.
\item[(b)] Take the transitive closure $<$ of
this covering relation:  
$C \lessdot C'$ if there are reduced words $\ii \in C$ and $\ii' \in C'$ that differ by replacement of a substring $(j,j+1,j)$ in $\ii$ by the substring $(j+1,j,j+1)$ in $\ii'$, as in \eqref{eq:Yang-Baxter-relation},
or equivalently, when $\Inv_3(C) \subset \Inv_3(C')=\Inv_3(C) \sqcup \{abc\}$ for some
unique triple $abc$.
\end{itemize}

Since each element $C$ of $B(n,2)$ is an equivalence class of reduced words,
we can compute the majority relation $\majrule{\voting}$ for the uniform vote tally $\voting=\one_{\prefix(C)}$ 
on the Condorcet domain $\prefix(C)$ of tiling type. Mimicking a figure from Ziegler~\cite[Figure 1]{Ziegler}, we depict in Figure~\ref{figure:redrawn Ziegler} below the higher Bruhat order $B(5,2)$, in which each element $C$ is labeled by the majority relation on the Condorcet domain $\prefix(C)$, and by the set $\Inv_3(C)$  of its inversion triples\footnote{To save space, for $C$ at higher ranks, we indicate the {\it complement} of $\Inv_3(C)$ within the set of all $\binom{5}{3}=10$ triples.} defining $C$.  This 
prompts the following question.

\begin{question}
    \label{question:higher Bruhat edges}
What can one say about the majority relations on the uniform vote tallies on the Condorcet domains $\prefix(C)$ and $\prefix(C')$, when $C \lessdot C'$ is a cover
relation in the higher Bruhat order $B(n,2)$?
\end{question}

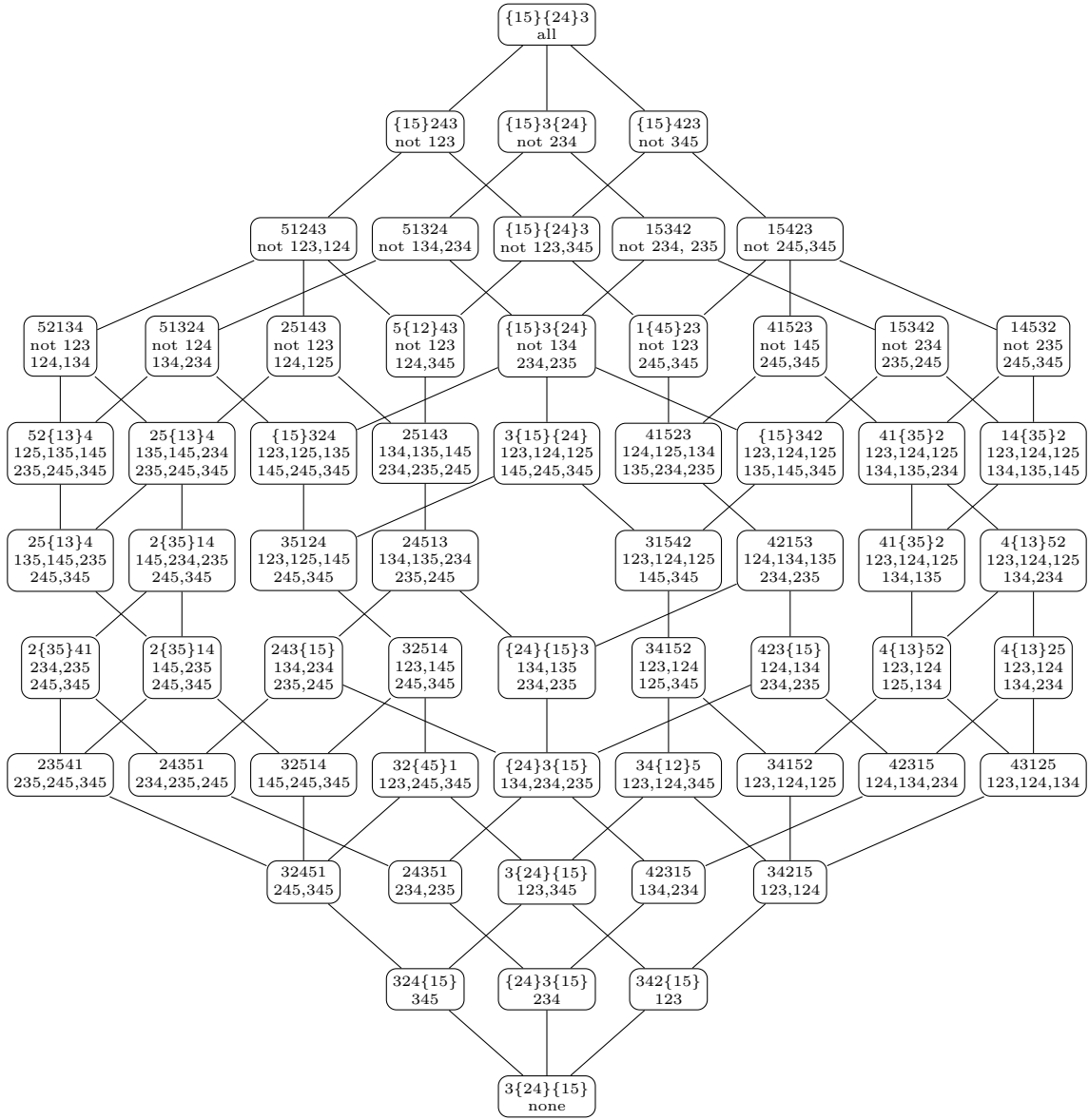
\begin{figure}[htbp]
\begingroup
\tikzset{every node/.style={draw, rounded corners, node font=\tiny}, every text node part/.style={align=center}}
$$\begin{tikzpicture}[yscale=1.5,xscale=1.7]
    \draw (0,0) node (nil) {$3\{24\}\{15\}$\\ none};
    \draw (-1,1) node (345) {$324\{15\}$\\345};
    \draw (0,1) node (234) {$\{24\}3\{15\}$\\234};
    \draw (1,1) node (123) {$342\{15\}$\\123};
    \draw (-2,2) node (245-345) {$32451$\\245,345};
    \draw (-1,2) node (234-235) {$24351$\\234,235};
    \draw (0,2) node (123-345) {$3\{24\}\{15\}$\\123,345};
    \draw (1,2) node (134-234) {$42315$\\134,234};
    \draw (2,2) node (123-124) {$34215$\\123,124};
    \draw (-4,3) node (235-245-345) {$23541$\\235,245,345};
    \draw (-3,3) node (234-235-245) {$24351$\\234,235,245};
    \draw (-2,3) node (145-245-345) {$32514$\\145,245,345};
    \draw (-1,3) node (123-245-345) {$32\{45\}1$\\123,245,345};
    \draw (0,3) node (134-234-235) {$\{24\}3\{15\}$\\134,234,235};
    \draw (1,3) node (123-124-345) {$34\{12\}5$\\123,124,345};
    \draw (2,3) node (123-124-125) {$34152$\\123,124,125};
    \draw (3,3) node (124-134-234) {$42315$\\124,134,234};
    \draw (4,3) node (123-124-134) {$43125$\\123,124,134};
    \draw (-4,4) node (234-235-245-345) {$2\{35\}41$\\234,235\\
    245,345};
    \draw (-3,4) node (145-235-245-345) {$2\{35\}14$\\145,235\\245,345};
    \draw (-2,4) node (134-234-235-245) {$243\{15\}$\\134,234\\235,245};
    \draw (-1,4) node (123-145-245-345) {$32514$\\123,145\\245,345};
    \draw (0,4) node (134-135-234-235) {$\{24\}\{15\}3$\\134,135\\234,235};
    \draw (1,4) node (123-124-125-345) {$34152$\\123,124\\125,345};
    \draw (2,4) node (124-134-234-235) {$423\{15\}$\\124,134\\234,235};
    \draw (3,4) node (123-124-125-134) {$4\{13\}52$\\123,124\\125,134};
    \draw (4,4) node (123-124-134-234) {$4\{13\}25$\\123,124\\134,234};
    \draw (-4,5) node (135-145-235-245-345) {$25\{13\}4$\\135,145,235\\245,345};
    \draw (-3,5) node (145-234-235-245-345) {$2\{35\}14$\\145,234,235\\245,345};
    \draw (-2,5) node (123-125-145-245-345) {$35124$\\123,125,145\\245,345};
    \draw (-1,5) node (134-135-234-235-245) {$24513$\\134,135,234\\235,245};
    \draw (1,5) node (123-124-125-145-345) {$31542$\\123,124,125\\145,345};
    \draw (2,5) node (124-134-135-234-235) {$42153$\\124,134,135\\234,235};
    \draw (3,5) node (123-124-125-134-135) {$41\{35\}2$\\123,124,125\\134,135};
    \draw (4,5) node (123-124-125-134-234) {$4\{13\}52$\\123,124,125\\134,234};
    \draw (-4,6) node (125-135-145-235-245-345) {$52\{13\}4$\\125,135,145\\235,245,345};
    \draw (-3,6) node (135-145-234-235-245-345) {$25\{13\}4$\\135,145,234\\235,245,345};
    \draw (-2,6) node (123-125-135-145-245-345) {$\{15\}324$\\123,125,135\\145,245,345};
    \draw (-1,6) node (134-135-145-234-235-245) {$25143$\\134,135,145\\234,235,245};
    \draw (0,6) node (123-124-125-145-245-345) {$3\{15\}\{24\}$\\123,124,125\\145,245,345};
    \draw (1,6) node (124-125-134-135-234-235) {$41523$\\124,125,134\\135,234,235};
    \draw (2,6) node (123-124-125-135-145-345) {$\{15\}342$\\123,124,125\\135,145,345};
    \draw (3,6) node (123-124-125-134-135-234) {$41\{35\}2$\\123,124,125\\134,135,234};
    \draw (4,6) node (123-124-125-134-135-145) {$14\{35\}2$\\123,124,125\\134,135,145};
    \draw (-4,7) node (125-135-145-234-235-245-345) {$52134$\\not 123\\124,134};
    \draw (-3,7) node (123-125-135-145-235-245-345) {$51324$\\not 124\\134,234};
    \draw (-2,7) node (134-135-145-234-235-245-345) {$25143$\\not 123\\124,125};
    \draw (-1,7) node (125-134-135-145-234-235-245) {$5\{12\}43$\\not 123\\124,345};
    \draw (0,7) node (123-124-125-135-145-245-345) {$\{15\}3\{24\}$\\not 134\\234,235};
    \draw (1,7) node (124-125-134-135-145-234-235) {$1\{45\}23$\\not 123\\245,345};
    \draw (2,7) node (123-124-125-134-135-234-235) {$41523$\\not 145\\245,345};
    \draw (3,7) node (123-124-125-134-135-145-345) {$15342$\\not 234\\235,245};
    \draw (4,7) node (123-124-125-134-135-145-234) {$14532$\\not 235\\245,345};
    \draw (-2,8) node (125-134-135-145-234-235-245-345) {$51243$\\not 123,124};
    \draw (-1,8) node (123-124-125-135-145-235-245-345) {$51324$\\not 134,234};
    \draw (0,8) node (124-125-134-135-145-234-235-245) {$\{15\}\{24\}3$\\not 123,345};
    \draw (1,8) node (123-124-125-134-135-145-245-345) {$15342$\\not 234, 235};
    \draw (2,8) node (123-124-125-134-135-145-234-235) {$15423$\\not 245,345};
    \draw (-1,9) node (124-125-134-135-145-234-235-245-345) {$\{15\}243$\\not 123};
    \draw (0,9) node (123-124-125-134-135-145-235-245-345) {$\{15\}3\{24\}$\\not 234};
    \draw (1,9) node (123-124-125-134-135-145-234-235-245) {$\{15\}423$\\not 345};
    \draw (0,10) node (123-124-125-134-135-145-234-235-245-345) {$\{15\}\{24\}3$\\all};
    \foreach \x in {345,234,123} {\draw (nil) -- (\x);}
    \foreach \x in {245-345,123-345} {\draw (345) -- (\x);}
    \foreach \x in {234-235,134-234} {\draw (234) -- (\x);}
    \foreach \x in {123-345,123-124} {\draw (123) -- (\x);}
    \foreach \x in {235-245-345,145-245-345,123-245-345} {\draw (245-345) -- (\x);}
    \foreach \x in {234-235-245,134-234-235} {\draw (234-235) -- (\x);}
    \foreach \x in {123-245-345,123-124-345} {\draw (123-345) -- (\x);}
    \foreach \x in {134-234-235,124-134-234} {\draw (134-234) -- (\x);}
    \foreach \x in {123-124-345,123-124-125,123-124-134} {\draw (123-124) -- (\x);}
    \foreach \x in {234-235-245-345,145-235-245-345} {\draw (235-245-345) -- (\x);}
    \foreach \x in {234-235-245-345,134-234-235-245} {\draw (234-235-245) -- (\x);}
    \foreach \x in {145-235-245-345,123-145-245-345} {\draw (145-245-345) -- (\x);}
    \draw (123-245-345) -- (123-145-245-345);
    \foreach \x in {134-234-235-245,134-135-234-235,124-134-234-235} {\draw (134-234-235) -- (\x);}
    \draw (123-124-345) -- (123-124-125-345);
    \foreach \x in {123-124-125-345,123-124-125-134} {\draw (123-124-125) -- (\x);}
    \foreach \x in {124-134-234-235,123-124-134-234} {\draw (124-134-234) -- (\x);}
    \foreach \x in {123-124-125-134,123-124-134-234} {\draw (123-124-134) -- (\x);}
    \draw (234-235-245-345) -- (145-234-235-245-345);
    \foreach \x in {135-145-235-245-345,145-234-235-245-345} {\draw (145-235-245-345) -- (\x);}
    \draw (134-234-235-245) -- (134-135-234-235-245);
    \draw (123-145-245-345) -- (123-125-145-245-345);
    \foreach \x in {134-135-234-235-245,124-134-135-234-235} {\draw (134-135-234-235) -- (\x);}
    \draw (123-124-125-345) -- (123-124-125-145-345);
    \draw (124-134-234-235) -- (124-134-135-234-235);
    \foreach \x in {123-124-125-134-135,123-124-125-134-234} {\draw (123-124-125-134) -- (\x);}
    \draw (123-124-134-234) -- (123-124-125-134-234);
    \foreach \x in {125-135-145-235-245-345,135-145-234-235-245-345} {\draw (135-145-235-245-345) -- (\x);}
    \draw (145-234-235-245-345) -- (135-145-234-235-245-345);
    \foreach \x in {123-125-135-145-245-345,123-124-125-145-245-345} {\draw (123-125-145-245-345) -- (\x);}
    \draw (134-135-234-235-245) -- (134-135-145-234-235-245);
    \foreach \x in {123-124-125-145-245-345,123-124-125-135-145-345} {\draw (123-124-125-145-345) -- (\x);}
    \draw (124-134-135-234-235) -- (124-125-134-135-234-235);
    \foreach \x in {123-124-125-134-135-234,123-124-125-134-135-145} {\draw (123-124-125-134-135) -- (\x);}
    \draw (123-124-125-134-234) -- (123-124-125-134-135-234);
    \foreach \x in {125-135-145-234-235-245-345,123-125-135-145-235-245-345} {\draw (125-135-145-235-245-345) -- (\x);}
    \foreach \x in {125-135-145-234-235-245-345,134-135-145-234-235-245-345} {\draw (135-145-234-235-245-345) -- (\x);}
    \foreach \x in {123-125-135-145-235-245-345,123-124-125-135-145-245-345} {\draw (123-125-135-145-245-345) -- (\x);}
    \foreach \x in {134-135-145-234-235-245-345,125-134-135-145-234-235-245} {\draw (134-135-145-234-235-245) -- (\x);}
    \draw (123-124-125-145-245-345) -- (123-124-125-135-145-245-345);
    \foreach \x in {124-125-134-135-145-234-235,123-124-125-134-135-234-235} {\draw (124-125-134-135-234-235) -- (\x);}
    \foreach \x in {123-124-125-135-145-245-345,123-124-125-134-135-145-345} {\draw (123-124-125-135-145-345) -- (\x);}
    \foreach \x in {123-124-125-134-135-234-235,123-124-125-134-135-145-234} {\draw (123-124-125-134-135-234) -- (\x);}
    \foreach \x in {123-124-125-134-135-145-345,123-124-125-134-135-145-234} {\draw (123-124-125-134-135-145) -- (\x);}
    \foreach \x in {125-135-145-234-235-245-345,134-135-145-234-235-245-345,125-134-135-145-234-235-245} {\draw (125-134-135-145-234-235-245-345) -- (\x);}
    \foreach \x in {123-125-135-145-235-245-345,123-124-125-135-145-245-345} {\draw (123-124-125-135-145-235-245-345) -- (\x);}
    \foreach \x in {125-134-135-145-234-235-245,124-125-134-135-145-234-235} {\draw (124-125-134-135-145-234-235-245) -- (\x);}
    \foreach \x in {123-124-125-135-145-245-345,123-124-125-134-135-145-345} {\draw (123-124-125-134-135-145-245-345) -- (\x) ;}
    \foreach \x in {124-125-134-135-145-234-235,123-124-125-134-135-234-235,123-124-125-134-135-145-234} {\draw (123-124-125-134-135-145-234-235) -- (\x);}
    \foreach \x in {125-134-135-145-234-235-245-345,124-125-134-135-145-234-235-245} {\draw (124-125-134-135-145-234-235-245-345) -- (\x);}
    \foreach \x in {123-124-125-135-145-235-245-345,123-124-125-134-135-145-245-345} {\draw (123-124-125-134-135-145-235-245-345) -- (\x);}
    \foreach \x in {124-125-134-135-145-234-235-245,123-124-125-134-135-145-234-235} {\draw (123-124-125-134-135-145-234-235-245) -- (\x);}
    \foreach \x in {124-125-134-135-145-234-235-245-345,123-124-125-134-135-145-235-245-345,123-124-125-134-135-145-234-235-245} {\draw (123-124-125-134-135-145-234-235-245-345) -- (\x);}
\end{tikzpicture}$$
\endgroup
\caption{Commutation classes of reduced words for $54321 \in S_5$, labeled by the majority relation on each class, together with the inversion triples of that class.}\label{figure:redrawn Ziegler}
\end{figure}

For example, if $C \lessdot C'$ in which
$\Inv_3(C')=\Inv_3(C) \sqcup \{abc\}$ with $a<b<c$, do the majority relations for $\prefix(C)$ and $\prefix(C')$ differ only in the relative orderings of the values within the range $a,a+1,\ldots,c-1,c$? An interesting example here is the class $C$ with 
$\Inv_3(C)=\{123, 124\}$, represented by the reduced word 
$$
(2, 3, 1, 2, 1, 3, 4, 3, 2, 1),
$$
for which $\voting=\one_{\prefix(C)}$ has majority relation
$
34215.
$
Adding one inversion triple 
$abc=134$ to give $\Inv_3(C')=\{123, 124, 134\}$ gives the class $C'$ represented by 
$$
(2, 3, {\mathbf{2}}, {\mathbf{1}}, {\mathbf{2}}, 3, 4, 3, 2, 1)
$$ 
with boldface indicating the move 
\eqref{eq:Yang-Baxter-relation} adding the inversion triple. Then $\voting=\one_{\prefix(C')}$ has majority relation
$
43125,
$
in which we see that the value $2$ has changed position between the two majority relations, not just the positions of the values $\{1,3,4\}$ from the inversion triple. The position of $5$, on the other hand, is unchanged.

\section*{Acknowledgments}
The authors thank Patrek K\'arason Ragnarsson
for supplying 
invaluable code to compute the majority rule $\majrule{\voting}$ for the uniform vote tally $\voting = \one_{\prefix(C)}$  
on a Condorcet domain 
of tiling type, given any reduced word in the commutation class $C$.
The first author was supported by NSF grant DMS-2450430, and the second author was supported by NSF grant DMS-2054436.

\bibliographystyle{abbrv}
\bibliography{references}

\end{document}